\documentclass[a4paper,11pt, reqno]{amsart}
\usepackage{amssymb,amsthm,amsmath}
\usepackage{amsmath}
 \usepackage{enumerate}
\usepackage{ifthen}
\usepackage{graphicx}
\usepackage{cite}
\usepackage{tikz}
\usepackage{comment}
\usepackage{amsmath,amscd,amssymb}
\usepackage{latexsym}
\usepackage[colorlinks,citecolor=blue,pagebackref,hypertexnames=false]{hyperref}

\numberwithin{equation}{section}
\allowdisplaybreaks[2]
\theoremstyle{plain}
\newtheorem{theorem}{Theorem}[section]

\newtheorem{lemma}[theorem]{Lemma}

\theoremstyle{definition}
\newtheorem{definition}[theorem]{Definition}

\theoremstyle{remark}
\newtheorem{remark}[theorem]{Remark}

\newtheorem{case[theorem]}{Case}

\def\norm#1.#2.{\lVert#1\rVert_{#2}}

\title[Dispersive decay for the mass-critical Sch\"odinger equation when $d\geq 3$]{Dispersive decay for the mass-critical Sch\"odinger equation when $d\geq 3$}

\author{Jiabin Qian}
\author{Manli Song}
\thanks{Corresponding author: Manli Song}

\address{\endgraf School of Mathematics and Statistics, Northwestern Polytechnical University, Xi'an, Shaanxi 710129, China}\email{qjb330424@mail.nwpu.edu.cn}

\address{\endgraf School of Mathematics and Statistics, Northwestern Polytechnical University, Xi'an, Shaanxi 710129, China} \email{mlsong@nwpu.edu.cn}

\keywords{Decay estimates; Nonlinear Schr\"odinger equations;  Mass-critical}
\subjclass[2020]{Primary 35Q55, 35B40.}
\date{\today}
\begin{document}

\begin{abstract}
In this paper we establish the pointwise-in-time dispersive decay for solutions to the mass-critical nonlinear Schr\"odinger equation in spatial dimensions $d\geq3$.  Our argument relies on a delicate decomposition of the nonlinearity and an improved linear estimate, which together enable us to control the nonlinear contribution. This work unifies a framework for
extending the foundational results established in an earlier paper by Fan, Killip, Visan, and Zhao [Math. Z. \textbf{311}(1), Paper No. 21, 16 pp (2025)], where the same problem was addressed for spatial dimensions $d=1,2,3$, to the general higher-dimensional setting $d\geq3$.
\end{abstract}

	\maketitle

	\allowdisplaybreaks

	\tableofcontents

	\section{Introduction}
The paper studies the long-time behavior of solutions to the following mass-critical nonlinear Schr\"odinger equation:
\begin{equation}\label{NLS}
\begin{cases}
&i u_{t} +\Delta u = F(u), \\
&u(0,x) = u_0(x)\in L^2(\mathbb{R}^d),
\end{cases}
\end{equation}
where $F(u)=\pm|u|^\frac{4}{d} u$ and $u(t,x)$ is a complex-valued function on spacetime $\mathbb{R}\times\mathbb{R}^d$. With the plus sign, the equation \eqref{NLS} is said to be defocusing; with the minus sign, it is said to be focusing. This equation is mass-critical, which refers to  the scaling symmetry associated with \eqref{NLS}, namely
\begin{equation}
u(t,x)\mapsto u_\lambda(t,x):=\lambda^\frac{d}{2}u(\lambda^2 t,\lambda x), \text{ for }\lambda>0,
\end{equation}
and also preserves the (conserved) mass of the solution
\begin{equation}\label{mass-preserved}
M(u(t))=\int_{\mathbb{R}^d}|u(t,x)|^2dx=M(u_0).
\end{equation}

In a seminal series of contributions \cite{Dodson2012, Dodson2015, Dodson2016, Dodson}, Dodson established the global well-posedness and scattering theory for solutions to \eqref{NLS}. In the focusing case, global wellposedness holds under the additional assumption that the mass of the initial data is smaller than that of the ground state $Q$, which is the unique
non-negative, radial, $H^1(\mathbb{R}^d)$-solution to the elliptic equation
\begin{equation*}
\Delta Q+|Q|^\frac{4}{d} Q=Q.
\end{equation*}
This mass restriction is sharp: finite time blowup can occur for solutions to (1.1) with initial data with mass $M(u_0)>M(Q)$. In the following theorem, we summarize the well-posedness results that are
needed.
\begin{theorem}[\cite{Dodson2012, Dodson2015, Dodson2016, Dodson}]\label{Dodson} Fix $d\geq 1$ and let $u_0\in L^2(\mathbb{R}^d)$. In the focusing case assume also that $M(u_0)<M(Q)$. Then there exists a unique global solution $u\in C_tL_x^2\cap L_{t,x}^\frac{2(d+2)}{d}(\mathbb{R}\times\mathbb{R}^d)$ to \eqref{NLS} with initial data
$u_0$ which satisfies
\begin{equation}\label{p=q decay}
\|u\|_{L_{t,x}^\frac{2(d+2)}{d}(\mathbb{R}\times\mathbb{R}^d)}\leq C(\|u_0\|_{L_x^2}).
\end{equation}
Moreover, there exist asymptotic states $u_\pm\in L^2(\mathbb{R}^d)$ such that
\begin{equation}\label{scattering}
\lim\limits_{t\rightarrow \pm\infty}\|u(t)-e^{it\Delta}u_\pm(x)\|_{L^2_x}=0.
\end{equation}
\end{theorem}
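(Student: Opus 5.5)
This is Dodson's theorem, cited from the literature, so a full proof is out of scope here. What I can give is the standard architecture of the argument, which reduces scattering to ruling out a minimal non-scattering solution.

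\emph{Step 1: local theory and conditional scattering.} First I would set up Strichartz-based local well-posedness in $L^2$. By a contraction argument in $L_{t,x}^{2(d+2)/d}$, with the nonlinearity estimated in the dual space $L_{t,x}^{2(d+2)/(d+4)}$, one obtains the following.
\begin{enumerate}[(i)]
\item There is a unique local solution for any $u_0\in L^2$.
\item Solutions with small mass are global and scatter.
\item A stability (long-time perturbation) lemma holds.
\item Finiteness of the $L_{t,x}^{2(d+2)/d}$ norm on $\mathbb{R}$ implies the existence of $u_\pm$ in \eqref{scattering}. This follows because $e^{-it\Delta}u(t)$ is Cauchy in $L^2$, by Strichartz applied to the Duhamel integral.
\end{enumerate}
Consequently the theorem reduces to the a priori bound \eqref{p=q decay}. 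In the focusing case, the sharp Gagliardo--Nirenberg inequality together with $M(u_0)<M(Q)$ gives coercivity of the energy, which is used below.

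\emph{Step 2: concentration compactness.} Suppose the bound fails, and let $M_c$ be the critical mass at which it first fails. Using the $L^2$ linear profile decomposition (Merle--Vega, Carles--Keraani, B\'egout--Vargas), which accounts for the noncompact symmetries of translation, modulation, scaling and Galilean boost, together with the stability lemma, I would extract a minimal-mass blowup solution. This solution is almost periodic modulo symmetries, with parameters $N(t)$, $x(t)$, $\xi(t)$. After rescaling one may reduce to three scenarios: $N(t)\le 1$ with $\int_0^\infty N(t)^3\,dt<\infty$ (rapid cascade), the same with $\int N^3=\infty$ (quasi-soliton), or self-similar-type behaviour. In dimension $d\ge3$ the Galilean parameter $\xi(t)$ can be removed in the radial setting, but in general it must be controlled.

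\emph{Step 3: excluding the scenarios, the main obstacle.} For the rapid cascade, I would prove additional regularity, namely $u\in H^s$ for large $s$, via a long-time Strichartz estimate. Mass conservation combined with the smallness of the energy then forces $u\equiv0$. For the quasi-soliton, I would use a frequency-localized interaction Morawetz estimate, bounding $\int_0^T N(t)^3\,dt$ uniformly in $T$ with an error controlled by the long-time Strichartz estimate. This yields a contradiction as $T\to\infty$; in the focusing case one additionally needs the mass restriction to make the Morawetz quantity coercive.

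The hardest step is the long-time Strichartz estimate. It requires a bilinear Strichartz estimate with Galilean-adapted frequency decompositions, together with an induction on frequency scales $N=2^k$ over intervals where $\int N(t)^3$ is controlled. The key difficulty is controlling the error terms arising from the low-frequency truncation $P_{\le CK}$ in the interaction Morawetz functional. For $d\ge3$ this part is somewhat simpler than in $d=1,2$, because the cubic-type nonlinearity structure is less delicate and Dodson's first paper handles it directly. Uniqueness follows from the Strichartz contraction argument in Step 1.
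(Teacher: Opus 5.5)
This matches the paper, which simply quotes the theorem from Dodson's work without proof. Your outline is a fair summary of that literature: Strichartz local theory and the scattering criterion, extraction of a minimal-mass almost periodic solution, and exclusion of the rapid-cascade and quasi-soliton scenarios via long-time Strichartz and frequency-localized interaction Morawetz estimates. The sketch has a few minor slips:
\begin{enumerate}[(i)]
\item The self-similar scenario you list in Step 2 is never excluded in Step 3. It is absorbed into the rapid cascade, since on $[1,\infty)$ one has $N(t)\sim t^{-1/2}\le 1$ and $\int_1^\infty t^{-3/2}\,dt<\infty$.
\item The rapid-cascade contradiction rests on energy conservation together with $\|u(t)\|_{\dot H^1}\to 0$, not on mass conservation.
\item For $d\ge3$ the nonlinearity $|u|^{4/d}u$ is not cubic, so that is not why $d\ge3$ was the easier case.
\end{enumerate}
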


The assertion \eqref{scattering} is commonly referred to as \textit{scattering}, which demonstrates that solutions to the nonlinear equation \eqref{NLS} bear an asymptotic resemblance to those of the linear Schrödinger equation. Notably, the space-time bound \eqref{Global} also applies to the linear Schr\"odinger equation, and this is precisely a prototypical \textit{Strichartz inequality}. Then it naturally raises the inquiry of which properties of the linear Schr\"odinger are shared by solutions to the nonlinear equation \eqref{NLS}. Specifically, one salient characteristic exhibited by the linear Schrödinger equation is \textit{dispersive decay}: for any $2\leq r\leq\infty$,
\begin{equation}\label{classical decay}
\|e^{it\Delta}u_0\|_{L_x^r}\lesssim |t|^{-d(\frac{1}{2}-\frac{1}{r})}\|u_0\|_{L^{r'}_x},\;\forall t\neq0.
\end{equation}
Indeed, it is obtained via interpolation between the conservation of mass
\begin{equation*}
\|e^{it\Delta}u_0\|_{L_x^2}=\|u_0\|_{L^2_x},
\end{equation*}
which follows from the Plancherel theorem, and the dispersive estimate
\begin{equation*}
\|e^{it\Delta}u_0\|_{L_x^\infty}\lesssim|t|^{-\frac{d}{2}}\|u_0\|_{L^1_x},
\end{equation*}
which arises from the fundamental solution to the linear Schr\"odinger equation. Before the advent of Strichartz estimates, estimate of the form \eqref{classical decay}, namely, quantitative decay pointwise in time, was the key tool to understand the long-time behavior of solutions. For this reason, the dispersive decay estimate was closely related to the study of well-posedness and scattering and required smooth initial data. See \cite{Hayashi-Tsutsumi, Klainerman1985, Klainerman-Ponce, Lin-Strauss, Morawetz-Strauss, Segal} as well as the references therein.

Motivated by the scattering property \eqref{scattering} and the linear dispersive decay \eqref{classical decay}, it is pertinent to investigate whether solutions to the scaling-critical nonlinear Schr\"odinger equation likewise possess the characteristic of dispersive decay. Ma, Wang, Yu and Zhao \cite{Ma-Wang-Yu-Zhao} established a dispersive decay estimate for initial data with $H^{1+}$ regularity. Their work primarily focused on the hyperbolic space setting, with a subsequent discussion of extension to $\mathbb{R}^3$. This result improved earlier findings requiring initial data with $H^3$ regularity, see \cite{Fan-Zhao, Guo-Huang-Song}. Subsequent work has significantly lowered the regularity required for dispersive decay. For the defocusing cubic nonlinear Schr\"odinger equation on $\mathbb{R}^3$, Fan, Staffilani and Zhao \cite{Fan-Staffilani-Zhao} showed that the decay rate of nonlinear solutions is comparable to that of linear solutions, with initial data regularity substantially weaker than in previous studies, see \cite{Fan-Zhao, Fan-Zhao2023}. However, prior to \cite{Fan-Killip-Visan-Zhao}, these results all required strictly higher regularity than that needed for well-posedness or scattering, and failed to establish a linear dependence on the initial data in the sense of \eqref{classical decay}. In \cite{Fan-Killip-Visan-Zhao}, Fan, Killip, Visan, and Zhao demonstrated dispersive decay for the solution to the mass-critical nonlinear Schr\"odinger equation \eqref{NLS} with initial data belonging to $L^2$ in spatial dimensions $d=1,2,3$. A key insight is that this result only requires the initial data to lie in the scaling-critical space $L^2$. For the spatial dimensions $d=1,2$, the proof was established by parallel methods, using crucially Lorentz-space improvements of the traditionally Strichartz inequality; For $d=3$, however, this approach is inapplicable due to the low power of nonlinearity. Instead, the result was derived through a delicate decomposition of the nonlinearity and the unusual linear estimates. With Leveraging the approach in \cite{Fan-Killip-Visan-Zhao}, Kowalski \cite{Kowalski} reduced the required regularity to the scaling-critical spaces $\dot{H}^1$ for energy-critical nonlinear Schr\"odinger equation in spatial dimensions $d=3,4$.

In this paper, we prove the dispersive decay, pointwise in time, for solutions to the mass-critical nonlinear Schr\"odinger equation \eqref{NLS} in higher spatical dimensions $d\geq3$.
\begin{theorem}\label{main result} Fix $d\geq3$ and $2\vee\frac{2d(d-1)}{d^2-2d+4}<r<\frac{2d}{d-2}$. Given any $u_0\in L^2\cap L^{r'}(\mathbb{R}^d)$ satisfying the hypotheses of Theorem \ref{Dodson}, let $u$ denote the unique global solution to \eqref{NLS} with initial data $u_0$. Then
\begin{equation}\label{main}
\sup\limits_{t\neq 0}|t|^{d(\frac{1}{2}-\frac{1}{r})}\|u(t)\|_{L_x^r}\leq C(\|u_0\|_{L_x^2})\|u_0\|_{L_x^{r'}}.
\end{equation}
\end{theorem}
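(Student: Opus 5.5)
We follow the bootstrap strategy of \cite{Fan-Killip-Visan-Zhao}. By time-reversal symmetry we take $t>0$, and by the Duhamel formula
\begin{equation*}
u(t)=e^{it\Delta}u_0-i\int_0^t e^{i(t-s)\Delta}F(u(s))\,ds .
\end{equation*}
The linear term is controlled by \eqref{classical decay}. Set
\begin{equation*}
\|u\|_{X(T)}:=\sup_{0<t\le T} t^{d(\frac12-\frac1r)}\|u(t)\|_{L^r_x}.
\end{equation*}
This quantity is finite for each fixed $T$, for instance after first truncating or approximating the data with Schwartz data and using persistence of regularity. The goal is an estimate of the form
\begin{equation*}
\|u\|_{X(T)}\lesssim \|u_0\|_{L^{r'}}+\eta\,\|u\|_{X(T)}+C(\eta,\|u_0\|_{L^2})\|u_0\|_{L^{r'}}
\end{equation*}
with $\eta$ small, uniformly in $T$, which closes the bootstrap. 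Smallness comes from \eqref{p=q decay}: we partition $\mathbb{R}$ into $J=J(\|u_0\|_{L^2},\eta)$ intervals $I_j$ on each of which $\|u\|_{L^{2(d+2)/d}_{t,x}(I_j)}\le\eta$. Alternatively, we split the Duhamel integral into $s\in[0,t/2]$ and $s\in[t/2,t]$.

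\textbf{Early times, $s\in[0,t/2]$.} Here $t-s\sim t$, so we apply the dispersive estimate \eqref{classical decay} from $L^{r'}$ to $L^r$, which gives a factor $t^{-d(\frac12-\frac1r)}$. It remains to bound $\int_0^{t/2}\|F(u(s))\|_{L^{r'}}ds$. We write $|u|^{4/d}u$ and place one factor $u$ in $L^r$, bounded by $s^{-d(\frac12-\frac1r)}\|u\|_X$. The remaining $|u|^{4/d}$ goes into a space-time norm interpolated between $L^\infty_tL^2_x$ and the scattering norm, using H\"older in time. Since $r'<2$, the exponent bookkeeping needs $\frac{1}{r'}=\frac1r+\frac{4}{d}\cdot\frac1p$ with $p\ge 2$ an admissible space exponent. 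This is where the lower bound on $r$ enters, and the time integral must remain scale-invariant. If a single H\"older step does not close, we split further into $s\in[0,\epsilon t]$ and $[\epsilon t,t/2]$.

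\textbf{Late times, $s\in[t/2,t]$, the main obstacle.} Here the dispersive kernel $(t-s)^{-d(\frac12-\frac1r)}$ is not integrable near $s=t$ unless we use Strichartz-type estimates. Since $4/d<1$ for $d\ge 5$ (and $4/d\le 1$ for $d=4$), the nonlinearity is sublinear in the $|u|^{4/d}$ factor, so we cannot place two copies of $u$ in decaying norms; this is exactly the difficulty encountered for $d=3$ in \cite{Fan-Killip-Visan-Zhao}. Following their idea, we decompose the nonlinearity according to whether $|u|$ is large or small relative to a threshold $\lambda(s)\sim s^{-d/2}\|u_0\|_{L^{r'}}$-type scale, or equivalently $u=u_{\le}+u_{>}$ by size. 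The large part is estimated in $L^{r}$-based norms where the decay of $u$ is used, and the small part in $L^2$-based norms where the size restriction gives extra integrability.

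We then need an improved linear estimate: a bound for $\int_{t/2}^t e^{i(t-s)\Delta}G(s)\,ds$ in $L^r$ by weighted mixed norms of $G$ on $[t/2,t]$. We would prove this as an inhomogeneous (non-admissible) Strichartz estimate of Foschi/Vilela type, via interpolation of the dispersive bound with the $TT^*$ argument. The upper restriction $r<\frac{2d}{d-2}$ and the lower restriction $r>\frac{2d(d-1)}{d^2-2d+4}$ should be precisely the range in which these inhomogeneous estimates hold and the resulting time integrals converge. Matching these exponents is the delicate step.

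After the decomposition, each late-time piece is bounded by $\eta^{\theta}\|u\|_{X(T)}$ plus $C\|u_0\|_{L^{r'}}$. Summing over the finitely many intervals $I_j$ then closes the bootstrap and yields \eqref{main}.
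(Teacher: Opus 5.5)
There is a genuine gap. None of your steps actually closes, and the mechanism that makes the paper's argument work is missing.

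\textbf{Early times.} Putting one factor in $L^r_x$ forces the remaining $\frac4d$ powers into $L^p_x$ with $p=\frac{4r}{d(r-2)}$. Interpolating $L^\infty_tL^2_x$ with the scattering norm only produces admissible pairs, so H\"older in time leaves you with $\|s^{-d(\frac12-\frac1r)}\|_{L^a_s}$ where $\frac1a=d(\frac12-\frac1r)$ exactly. That is $\bigl(\int s^{-1}\,ds\bigr)^{1/a}$, which diverges logarithmically at both ends. Scaling forces this, so no single H\"older step gives a bound linear in $\|u\|_X$. Cutting at $\epsilon t$ does not help: the same divergence persists on $[0,\epsilon t]$, and a Strichartz-only bound there is $t^{-d(\frac12-\frac1r)}(\epsilon t)^{\frac{d(r-2)}{4r}}C(\|u_0\|_{L^2_x})$, which is neither scale-invariant nor linear in $\|u_0\|_{L^{r'}_x}$. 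Moreover $p\le\frac{2d}{d-2}$ only if $r\ge\frac{2d^2}{d^2-2d+4}$, so on part of the stated range (e.g.\ $2<r<\frac{18}{7}$ for $d=3$) the factor $\|u\|_{L^p_x}$ is not even controlled.

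The paper instead cuts at a time depending on the solution, $A=\|u\|_{X(T_{j-1})}^{\frac{4r}{d(r-2)}}$ (with an extra $\eta^{-1}$ on the current interval). On $[0,A]$ it uses no decay at all. Lemma~\ref{key lemma} with $(p,q)=(2,\frac{2d}{d-2})$, plus H\"older in time against a non-endpoint admissible norm, gives $t^{-d(\frac12-\frac1r)}A^{\frac{d(r-2)}{4r}}C(\|u_0\|_{L^2_x})$. On $[A,\frac t2]$ it interpolates $u$ between $L^r_x$ and one of $L^{\frac{2d}{d-2}}_x$, $L^r_x$ itself, or $L^2_x$, according to the three ranges of $r$. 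A power $\frac{d+4}{d}\theta>1$ of $\|u\|_X$ then appears, and the time weight becomes integrable on $[A,\infty)$, giving a factor $A^{-\kappa}$. Balancing the two pieces yields exactly the first power of $\|u\|_X$.

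\textbf{Late times.} You have not given an argument here. The size decomposition of $|u|^{\frac4d}u$ at a threshold $\lambda(s)$ is never carried out, and the claim that each piece is $\lesssim\eta^{\theta}\|u\|_X+C\|u_0\|_{L^{r'}_x}$ is unsupported. You also do not need Foschi/Vilela inhomogeneous estimates. The linear input is the fixed-time weighted bound of Lemma~\ref{key lemma}, obtained by interpolating the dual Strichartz estimate $L^{p'}_sL^{q'}_x\to L^2_x$ with the $L^1_x\to L^\infty_x$ dispersive bound carrying the weight $|t-s|^{-d/2}$.

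With this lemma the paper cuts $[\frac t2,t]$ at $t-B$, where $B=t^2\|u\|_X^{-\frac{4r}{d(r-2)}}$.
\begin{itemize}
\item On $[\frac t2,t-B]$ it again uses Strichartz norms only, with $\bigl(\int_{t/2}^{t-B}|t-s|^{-2}ds\bigr)^{\frac{d(r-2)}{4r}}\le B^{-\frac{d(r-2)}{4r}}$.
\item On $[t-B,t]$ it uses the decay-interpolated bound, whose weight is integrable at $s=t$, producing a positive power of $B$.
\end{itemize}
Balancing is again linear in $\|u\|_X$.

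\textbf{Range of $r$.} The restrictions on $r$ do not come from the validity range of inhomogeneous Strichartz estimates. They are exactly the conditions that the power of $\|u\|_X$ above exceed $1$, equivalently that the time weights be integrable at $s=\infty$ and at $s=t$: $r>\frac{2d(d-1)}{d^2-2d+4}$ in the range $r<\frac{2(d+1)}{d}$, and $r<\frac{2d}{d-2}$ (i.e.\ $2d-(d-2)r>0$) in the range $r>\frac{2(d+2)}{d}$.
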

\begin{remark}The key novelty of this paper can be seen as follows:
\begin{itemize}
\item For general higher dimensions $d\geq 3$, complications arise due to the power of the nonlinearity. To address the issue, we carry out a delicate decomposition of it by adapting the method in \cite{Fan-Killip-Visan-Zhao}.
Nevertheless, the unusual linear estimates in \cite[Lemma 2.6]{Fan-Killip-Visan-Zhao} are inadequate to yield the dispersive decay. Accordingly, we derive an improved linear estimate in Lemma \ref{key lemma} which can be found in Section \ref{section 2}.
\item In the proof of Theorem \ref{main result}, a critical step involves partitioning $r$ into three refined cases: $\frac{2d(d-1)}{d^2-2d+4}\vee 2<r<\frac{2(d+1)}{d}$, $\frac{2(d+1)}{d}\leq r\leq \frac{2(d+2)}{d}$ and $\frac{2(d+2)}{d}<r<\frac{2d}{d-2}$. Notably, for $\frac{2d(d-1)}{d^2-2d+4}\vee 2<r<\frac{2(d+1)}{d}$, the inequality $\frac{qr}{qr-2}\cdot\frac{d+4}{d}>r$ holds for any Schr\"odinger-admissible pair $(p,q)$. When $\frac{2(d+2)}{d}<r<\frac{2d}{d-2}$, the inverse inequality $\frac{qr}{qr-2}\cdot\frac{d+4}{d}<r$ holds for any Schr\"odinger-admissible pair $(p,q)$. In the intermediate range $\frac{2(d+1)}{d}\leq r\leq \frac{2(d+2)}{d}$, there exists a unique Schr\"odinger-admissible pair $(p,q)$ such that $\frac{qr}{qr-2}\cdot\frac{d+4}{d}=r$.
\item We develop a unified framework to establish the dispersive decay for the mass-critical Schr\"odinger equation when $d\geq3$. A key feature here is that no auxiliary assumptions are imposed beyond the initial data belonging to the scaling-critical space $L^2$, which is a minimal requirement for the existence of global solutions. It underlies the limitation $r<\frac{2d}{d-2}$.
Specially, for $d=3,4$, since $2\vee\frac{2d(d-1)}{d^2-2d+4}=2$, we not only recover the prior result in \cite{Fan-Killip-Visan-Zhao} for $d=3$, but also derive new dispersive decay estimates for $d=4$. Indeed, fix $d=4$ and $2<r<4$. Given any $u_0\in L^2\cap L^{r'}(\mathbb{R}^4)$ satisfying the hypotheses of Theorem \ref{Dodson}, let $u$ denote the unique global solution to \eqref{NLS} with initial data $u_0$. Then
\begin{equation}\label{d=4}
\sup\limits_{t\neq 0}|t|^{4(\frac{1}{2}-\frac{1}{r})}\|u(t)\|_{L_x^r}\leq C(\|u_0\|_{L_x^2})\|u_0\|_{L_x^{r'}}.
\end{equation}
\end{itemize}
\end{remark}

\section{Linear estimates and useful lemmas}\label{section 2}
This section serves to present some results in the theory of Strichartz estimates and useful lemmas to prove our main theorem.

\begin{definition}
Fix $d\geq 1$. We say that a pair $2\leq p,q\leq \infty$ is Schr\"odinger-admissible if $(d, p, q)\neq (2, 2,\infty)$ and
\begin{equation*}
\frac{2}{p}+\frac{d}{q}=\frac{d}{2}.
\end{equation*}
\end{definition}
In addition to the pointwise-in-time dispersive decay estimate \eqref{classical decay} for solutions to the linear Schr\"odinger equation, a broader framework of space-time estimates—known as \textit{Strichartz estimates}—has been developed. In the pioneering work \cite{R.S.Strichartz}, Strichartz initially established Strichartz estimates for the symmetric pair ($q=p$) based on a Fourier restriction
theorem and then the other cases were subsequently deduced from a combination of an abstract functional analysis argument known as the $TT^*$ duality argument and an $L^1\rightarrow L^\infty$ dispersive estimate, see Keel-Tao \cite{KT}.
\begin{theorem}[\cite{R.S.Strichartz, KT}] \label{Strichartz} Fix $d\geq1$. Let $(p,q)$ and $(\tilde{p},\tilde{q})$ are Schr\"odinger-admissible pairs. Then we have
\begin{align*}
\| e^{it\Delta} u_0 \|_{L_t^pL_x^q(\mathbb{R}\times\mathbb{R}^d)} &\lesssim \| u_0 \|_{L^2(\mathbb{R}^d)}, \\
\left\| \int_{\mathbb{R}} e^{is\Delta} G(s, \cdot) \, ds \right\|_{L^2(\mathbb{R}^d)} &\lesssim \| G \|_{L_t^{p'}L_x^{q'}(\mathbb{R}\times \mathbb{R}^d)}, \\
\left\| \int_0^t e^{i(t-s)\Delta} G(s) \, ds \right\|_{L_t^pL_x^q(\mathbb{R}\times\mathbb{R}^d)}  &\lesssim \|G \|_{L_t^{\tilde{p}'}L_x^{\tilde{q}'}(\mathbb{R}\times\mathbb{R}^d)}.
\end{align*}
\end{theorem}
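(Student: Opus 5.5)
The plan is to derive all three inequalities from two facts about the propagator: unitarity on $L^2$, and the dispersive bound $\|e^{it\Delta}f\|_{L^\infty_x}\lesssim |t|^{-d/2}\|f\|_{L^1_x}$, which comes from the explicit kernel $(4\pi i t)^{-d/2}e^{i|x-y|^2/4t}$. Interpolating these gives \eqref{classical decay}. The abstract vehicle is the $TT^*$ method.

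\emph{Reduction to one bilinear estimate.} Put $T u_0=e^{it\Delta}u_0$. Its adjoint is $T^*G=\int_{\mathbb{R}} e^{-is\Delta}G(s)\,ds$. So the first estimate, i.e.\ $T:L^2\to L^p_tL^q_x$ bounded, is equivalent to the dual (second) estimate $T^*:L^{p'}_tL^{q'}_x\to L^2$, with the harmless sign change $s\mapsto -s$. Both are equivalent to boundedness of
\begin{equation*}
TT^*G(t)=\int_{\mathbb{R}} e^{i(t-s)\Delta}G(s)\,ds
\end{equation*}
from $L^{p'}_tL^{q'}_x$ to $L^p_tL^q_x$. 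By \eqref{classical decay} with $r=q$,
\begin{equation*}
\|e^{i(t-s)\Delta}G(s)\|_{L^q_x}\lesssim |t-s|^{-d(\frac12-\frac1q)}\|G(s)\|_{L^{q'}_x},
\end{equation*}
and the admissibility relation gives $d(\frac12-\frac1q)=\frac2p$.

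\emph{Non-endpoint case $p>2$.} Here $0<\frac2p<1$, and the one-dimensional Hardy--Littlewood--Sobolev inequality in $t$ maps $L^{p'}_t$ to $L^p_t$ under convolution with $|t|^{-2/p}$, since $\frac1{p'}-\frac1p=1-\frac2p$. This gives the $TT^*$ bound for the diagonal pair. Composing $T$ for the pair $(p,q)$ with $T^*$ for a second pair $(\tilde p,\tilde q)$ then yields the untruncated version of the third estimate for mixed pairs.

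\emph{Endpoint $p=2$, $q=\frac{2d}{d-2}$ when $d\geq3$.} This is the main obstacle, because HLS fails at $\frac2p=1$. I would follow Keel--Tao \cite{KT}:
\begin{enumerate}[(i)]
\item Decompose the bilinear form $\iint\langle e^{-is\Delta}G(s),e^{-it\Delta}F(t)\rangle\,ds\,dt$ dyadically into pieces with $|t-s|\sim 2^j$.
\item Prove bounds for each piece in a two-parameter family of exponents near the endpoint, by interpolating the energy bound with the dispersive bound.
\item Sum in $j$ using real (bilinear) interpolation, which converts the geometric growth and decay on either side of the endpoint into a convergent sum exactly at it.
\end{enumerate}
The case $(d,p,q)=(2,2,\infty)$ is excluded, as in the definition.

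\emph{Retarded estimate.} The third estimate has $\int_0^t$, that is, the operator above with the time cutoff $s<t$ (after translating the time origin). When $p>\tilde p'$, which holds whenever not both pairs are endpoints, I would invoke the Christ--Kiselev lemma to pass from the untruncated bound to the truncated one. In the double-endpoint case the Keel--Tao argument applies directly to the truncated bilinear form, since its dyadic decomposition respects the restriction $s<t$.
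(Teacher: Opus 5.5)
Your proposal is correct and is essentially the argument the paper relies on. The paper gives no proof of its own; it cites Strichartz and Keel--Tao for the $TT^*$ argument driven by the $L^1\to L^\infty$ dispersive estimate, and you carry this out with HLS off the endpoint and the Keel--Tao dyadic decomposition plus bilinear real interpolation at $p=2$.

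The differences are cosmetic. You use Christ--Kiselev for the retarded estimate whenever $p>\tilde p'$, where Keel--Tao instead handle the truncated bilinear form directly and then interpolate. The case $p=\infty$, which your condition $0<\frac2p<1$ omits, is immediate from unitarity and Minkowski's inequality.
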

From the global spacetime bounds in Theorem \eqref{Dodson} and Strichartz estimates \eqref{Strichartz}, we derive the following lemma  which records general Schr\"odinger-admissible spacetime bounds for solutions of \eqref{NLS}.
\begin{lemma} Fix $d\geq1$. Let $(p,q)$ be a Schr\"odinger-admissible pair, that is, $2\leq p,q\leq \infty$, $\frac{2}{p}+\frac{d}{q}=\frac{d}{2}$ and $(d, p, q)\neq (2, 2,\infty)$. Given any $u_0\in L^2(\mathbb{R}^d)$ satisfying the hypotheses of Theorem \ref{Dodson}, let $u$ denote the unique global solution to \eqref{NLS} with initial data $u_0$. Then we have the global spacetime bound
\begin{equation}\label{p,q bound}
\|u\|_{L_t^pL_x^q(\mathbb{R}\times\mathbb{R}^d)}\leq C(\|u_0\|_{L_x^2}).
\end{equation}
\end{lemma}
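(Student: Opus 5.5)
The bound follows from Theorem \ref{Dodson} combined with the Strichartz estimates of Theorem \ref{Strichartz}, via Duhamel's formula and a partition of time into finitely many intervals on which the scattering norm is small.

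Write $S:=L_{t,x}^{\frac{2(d+2)}{d}}$ and let $M:=C(\|u_0\|_{L_x^2})$ be the bound \eqref{p=q decay}, so that $\|u\|_{S(\mathbb{R}\times\mathbb{R}^d)}\le M$. Note that $(\frac{2(d+2)}{d},\frac{2(d+2)}{d})$ is admissible. Its dual exponent satisfies $\frac{2(d+2)}{d+4}=\frac{2(d+2)}{d}\big/\big(1+\frac4d\big)$, so Hölder gives
\[
\|F(u)\|_{L_{t,x}^{\frac{2(d+2)}{d+4}}(I\times\mathbb{R}^d)}\le \|u\|_{S(I)}^{1+\frac4d}
\]
on any time interval $I$.

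Fix a small $\eta>0$, to be chosen depending only on the Strichartz constants. Partition $\mathbb{R}$ into $J\lesssim (M/\eta)^{\frac{2(d+2)}{d}}$ intervals $I_j=[t_j,t_{j+1}]$ with $\|u\|_{S(I_j)}\le\eta$; this is possible because $t\mapsto\|u\|_{S([a,t])}^{2(d+2)/d}$ is continuous and additive over adjacent intervals. On each $I_j$, write Duhamel's formula from $t_j$:
\[
u(t)=e^{i(t-t_j)\Delta}u(t_j)-i\int_{t_j}^t e^{i(t-s)\Delta}F(u(s))\,ds .
\]
Apply Theorem \ref{Strichartz}, putting the target pair $(p,q)$ on the left and the dual of the symmetric pair on the right. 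Using mass conservation \eqref{mass-preserved}, $\|u(t_j)\|_{L^2}=\|u_0\|_{L^2}$, this gives
\[
\|u\|_{L_t^pL_x^q(I_j)}\lesssim \|u_0\|_{L^2}+\|u\|_{S(I_j)}^{1+\frac4d}\lesssim \|u_0\|_{L^2}+\eta^{1+\frac4d}.
\]
Summing over the $J$ intervals yields \eqref{p,q bound}:
\[
\|u\|_{L^p_tL^q_x}\le \sum_j\|u\|_{L^p_tL^q_x(I_j)}\lesssim J\bigl(\|u_0\|_{L^2}+1\bigr).
\]

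For $p=\infty$ the estimate is immediate from mass conservation. The only point needing care is that the bound depends only on $\|u_0\|_{L^2}$. The smallness of $\eta$ is not essential here, because the right-hand side involves only the $S$ norm, which is already controlled. A single global application of Strichartz works too:
\[
\|u\|_{L^p_tL^q_x(\mathbb{R})}\lesssim \|u_0\|_{L^2}+M^{1+\frac4d},
\]
which already gives \eqref{p,q bound} directly. I would present this one-line global version, keeping the partition argument only as a remark in case endpoint issues arise. At the endpoint $p=2$ with $d\ge3$, Keel–Tao covers the retarded estimate as well, so no obstacle arises there either.
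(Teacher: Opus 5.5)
Your argument is correct, and your one-line global version takes a more direct route than the paper's.

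\textbf{The paper's route.} The paper places the nonlinearity in the non-symmetric dual space $L_t^{\tilde p'}L_x^{\tilde q'}$ with $\frac{1}{\tilde p'}=\frac{2}{d+2}+\frac1p$ and $\frac{1}{\tilde q'}=\frac{2}{d+2}+\frac1q$. H\"older then gives $\|u\|_{L_{t,x}^{2(d+2)/d}}^{4/d}\|u\|_{L_t^pL_x^q}$, so the unknown norm reappears on the right. One must therefore partition time into intervals where the $L_{t,x}^{2(d+2)/d}$ norm is small and bootstrap interval by interval. This implicitly also needs finiteness of $\|u\|_{L_t^pL_x^q(I_j)}$, or a continuity argument.

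\textbf{Your route.} You use the symmetric pair on the right, where $\|F(u)\|_{L_{t,x}^{2(d+2)/(d+4)}}=\|u\|_{L_{t,x}^{2(d+2)/d}}^{1+4/d}$ is already controlled by Theorem \ref{Dodson}. No smallness, partition, or continuity argument is needed, and the bound holds uniformly for every admissible $(p,q)$ at once. This also sidesteps a small issue in the paper's choice: for $d\ge3$, the pair $(\tilde p,\tilde q)$ is admissible only when $p\le\frac{2(d+2)}{d-2}$, since otherwise $\tilde p<2$. The remaining range of $p$ would then have to be recovered by interpolating with $L_t^\infty L_x^2$.

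\textbf{Trade-off.} The paper's formulation gives the ``small factor times the unknown norm'' structure. That is the template for persistence-of-regularity arguments, and it mirrors the bootstrap in Section 3, where smallness of $\eta$ is genuinely needed. For this lemma, though, your direct estimate suffices, and your partition argument is redundant.
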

\begin{proof}
By \eqref{mass-preserved}, it suffices to prove $2\leq p<\infty$. Using the Duhamel formula,
\begin{equation*}
u(t)=e^{it\Delta} u_0-i\int_0^t e^{i(t-s)\Delta} F(u(s))ds,
\end{equation*}
by the Strichartz estimates for $e^{it\Delta}$ in Theorem \ref{Strichartz} and the H\"older inequality, we have
\begin{equation}\label{p-q type}
\begin{aligned}
\|u\|_{L_t^pL_x^q(\mathbb{R}\times\mathbb{R}^d)}&\lesssim \|u_0\|_{L_x^2}+\||u|^\frac{4}{d}u\|_{L_t^{\tilde{p}'}L_x^{\tilde{q}'}(\mathbb{R}\times\mathbb{R}^d)}\\
&\lesssim \|u_0\|_{L_x^2}+\|u\|^\frac{4}{d}_{L_{t,x}^\frac{2(d+2)}{d}(\mathbb{R}\times\mathbb{R}^d)} \|u\|_{L_t^pL_x^q(\mathbb{R}\times\mathbb{R}^d)},
\end{aligned}
\end{equation}
where $(p,q)$ and $(\tilde{p},\tilde{q})$ are Schr\"odinger-admissible pairs satisfying
\begin{equation*}
\frac{1}{\tilde{p}'}=\frac{2}{d+2}+\frac{1}{p} \text{ and } \frac{1}{\tilde{q}'}=\frac{2}{d+2}+\frac{1}{q}.
\end{equation*}
By Theorem \ref{Dodson} and subdividing $\mathbb{R}$ into finitely many subintervals $I_j$ on which
\begin{equation*}
\|u\|_{L_t^pL_x^q(I_j\times\mathbb{R}^d)}<\eta,
\end{equation*}
for $\eta>0$ sufficiently small (depending only on the implicit constant in \eqref{p-q type}), a standard bootstrap argument implies
\begin{equation*}
\|u\|_{L_t^pL_x^q(\mathbb{R}\times\mathbb{R}^d)}\leq C(\|u_0\|_{L_x^2}).
\end{equation*}
\end{proof}
In order to establish the pointwise-in-time dispersive decay in Theorem \ref{main result}, we will employ the following linear estimate to control the contribution of the low power of the nonlinearity in spatial dimensions $d\geq3$.
\begin{lemma}\label{key lemma}
Fix $d\geq3$ and $r\in [2,\infty]$. Let $(p,q)$ be a Schr\"odinger-admissible pair, that is, $2\leq p,q\leq \infty$  and $\frac{2}{p}+\frac{d}{q}=\frac{d}{2}$. For any $t\in (0,\infty)$ and any interval $I\subseteq[0,t]$, we have
\begin{equation}\label{general control}
\begin{aligned}
\left\|\int_{I}e^{i(t-s)\Delta} G(s)ds\right\|_{L_x^r(\mathbb{R}^d)}\lesssim  \left\||t-s|^{-d(\frac{1}{2}-\frac{1}{r})}G(s)\right\|_{L^\frac{pr}{pr-2}_sL_x^\frac{qr}{qr-2}(I\times\mathbb{R}^d)}.
\end{aligned}
\end{equation}
\end{lemma}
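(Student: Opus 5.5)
The plan is to obtain \eqref{general control} by Stein's complex interpolation between the endpoints $r=2$ and $r=\infty$. First check the exponents. For $r=2$ we have $\frac{pr}{pr-2}=p'$, $\frac{qr}{qr-2}=q'$, and the weight $|t-s|^{0}=1$, so the estimate reads
\begin{equation*}
\Bigl\|\int_I e^{i(t-s)\Delta}G(s)\,ds\Bigr\|_{L^2_x}\lesssim \|G\|_{L^{p'}_sL^{q'}_x(I\times\mathbb{R}^d)}.
\end{equation*}
This is the dual homogeneous Strichartz estimate of Theorem \ref{Strichartz}: write $e^{i(t-s)\Delta}=e^{it\Delta}e^{-is\Delta}$, use unitarity of $e^{it\Delta}$, and extend $G$ by zero outside $I$. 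For $r=\infty$ both exponents equal $1$ and the weight is $|t-s|^{-d/2}$. Minkowski's inequality together with the dispersive bound $\|e^{i(t-s)\Delta}f\|_{L^\infty}\lesssim|t-s|^{-d/2}\|f\|_{L^1}$ gives the estimate directly.

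To interpolate with a weight that varies with $r$, I would use an analytic family of operators. For $z$ in the strip $0\le \operatorname{Re} z\le 1$, define
\begin{equation*}
T_zH:=\int_I e^{i(t-s)\Delta}\,|t-s|^{\frac{d}{2}z}H(s)\,ds .
\end{equation*}
On $\operatorname{Re} z=0$ the factor $|t-s|^{\frac{d}{2}z}$ is unimodular, so the $r=2$ bound gives $\|T_{iy}H\|_{L^2}\lesssim\|H\|_{L^{p'}_sL^{q'}_x}$ uniformly in $y$. On $\operatorname{Re} z=1$ the factor has modulus $|t-s|^{d/2}$, which cancels the dispersive decay and gives $\|T_{1+iy}H\|_{L^\infty}\lesssim\|H\|_{L^1_{s,x}}$, again uniformly in $y$. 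Take $\theta=1-\frac{2}{r}$, so that $\frac1r=\frac{1-\theta}{2}+\frac{\theta}{\infty}$. Then
\begin{equation*}
\frac{1-\theta}{p'}+\theta=1-\frac{2}{pr},
\end{equation*}
that is, the interpolated time exponent is $\frac{pr}{pr-2}$, and likewise the space exponent is $\frac{qr}{qr-2}$. Stein's theorem for mixed-norm Lebesgue spaces (complex interpolation of $L^{a}_sL^{b}_x$ spaces, Benedek--Panzone) then gives
\begin{equation*}
\|T_\theta H\|_{L^r}\lesssim\|H\|_{L^{\frac{pr}{pr-2}}_sL^{\frac{qr}{qr-2}}_x}.
\end{equation*}
Since $\frac d2\theta=d(\frac12-\frac1r)$, substituting $H=|t-s|^{-d(\frac12-\frac1r)}G$ yields \eqref{general control}. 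The endpoint pair $p=2$ is allowed because $d\ge3$ and the Keel--Tao estimate covers it.

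The main obstacle I expect is the technical justification of Stein interpolation here, namely admissible growth in $\operatorname{Im} z$ and the analyticity of $z\mapsto\langle T_zH_z,\Phi_z\rangle$. I would first reduce to simple functions $H$ and $\Phi$ with compact support in $(I\setminus\{t\})\times\mathbb{R}^d$, using density and, if necessary, monotone convergence in the weight. On such a support $|t-s|$ is bounded above and away from zero, so $|t-s|^{\frac d2 z}$ is bounded and analytic in $z$ and all the pairings are entire of bounded type. The point $s=t$ (possible if $t\in I$) has measure zero and is harmless. If complex interpolation of mixed norms seems too heavy, my fallback is to interpolate the bilinear form $\langle T_zH,\Phi\rangle$ directly, treating the $s$ and $x$ exponents by the three-lines lemma in the standard way.
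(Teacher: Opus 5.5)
Your proposal is correct and follows the paper's route: complex interpolation between the dual homogeneous Strichartz estimate (the case $r=2$) and the weighted dispersive bound (the case $r=\infty$). Your analytic family $T_z$, with the factor $|t-s|^{\frac{d}{2}z}$, makes explicit the change-of-weight (Stein) interpolation that the paper invokes in a single line.
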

\begin{remark} In Lemma \ref{key lemma}, when the Schr\"odinger-admissible pair $(p,q)=(\infty,2)$, the estimate \eqref{general control} reduces to
\begin{equation*}
\left\|\int_{I}e^{i(t-s)\Delta} G(s)ds\right\|_{L_x^r(\mathbb{R}^d)}\lesssim  \left\||t-s|^{-d(\frac{1}{2}-\frac{1}{r})}G(s)\right\|_{L^1_sL_x^\frac{r}{r-1}(I\times\mathbb{R}^d)}.
\end{equation*}
\end{remark}
\begin{remark} Specially, let the spatial dimension $d=3$ and choose $(p,q)=(\infty,2)$ and $(p,q)=(2,6)$ respectively. Then we have
\begin{equation*}
\left\|\int_{I}e^{i(t-s)\Delta} G(s)ds\right\|_{L_x^r(\mathbb{R}^3)}\lesssim  \left\||t-s|^{-d(\frac{1}{2}-\frac{1}{r})}G(s)\right\|_{L^1_sL_x^\frac{r}{r-1}(I\times\mathbb{R}^3)},
\end{equation*}
and
\begin{equation*}
\left\|\int_{I}e^{i(t-s)\Delta} G(s)ds\right\|_{L_x^r(\mathbb{R}^3)}\lesssim  \left\||t-s|^{-d(\frac{1}{2}-\frac{1}{r})}G(s)\right\|_{L^\frac{r}{r-1}_sL_x^\frac{3r}{3r-1}(I\times\mathbb{R}^3)},
\end{equation*}
which recovers Lemma 2.6 in \cite{Fan-Killip-Visan-Zhao}.
\end{remark}
\begin{proof}[Proof of Lemma \ref{key lemma}] It follows immediately from the complex interpolation between Strichartz estimates in Theorem \ref{Strichartz}
\begin{equation*}
\left\| \int_{I} e^{i(t-s)\Delta} G(s) \, ds \right\|_{L^2(\mathbb{R}^d)} \lesssim \|G\|_{L_s^{p'}L_x^{q'}(I\times\mathbb{R}^d)},
\end{equation*}
and the dispersive estimate \eqref{classical decay}
\begin{equation*}
\begin{aligned}
\left\|\int_{I}e^{i(t-s)\Delta} G(s)ds\right\|_{L_x^\infty(\mathbb{R}^d)}&\leq  \int_{I}\|e^{i(t-s)\Delta} G(s)\|_{L_x^\infty(\mathbb{R}^d)}ds\\
&\leq  \int_{I} |t-s|^{-\frac{d}{2}}\|G(s)\|_{L_x^1(\mathbb{R}^d)}ds\\
&\lesssim \left\||t-s|^{-\frac{d}{2}}G(s)\right\|_{L^1_sL_x^1(I\times\mathbb{R}^d)}
\end{aligned}
\end{equation*}
that
\begin{equation*}
\left\|\int_{I}e^{i(t-s)\Delta} G(s)ds\right\|_{L_x^r(\mathbb{R}^d)}\lesssim  \left\||t-s|^{-d(\frac{1}{2}-\frac{1}{r})}G(s)\right\|_{L^{\left(\frac{pr}{2}\right)'}_sL_x^{\left(\frac{qr}{2}\right)'}(I\times\mathbb{R}^d)},
\end{equation*}
which completes the proof of \eqref{general control}.
\end{proof}

\section{Proof of Theorem \ref{main result}}
By time-reversal symmetry, it suffices to prove the pointwise decay estimate for $t\in (0,\infty)$. By the density of Schwartz functions in $L^2\cap L^{r'}(\mathbb{R}^d)$, it suffices to show \eqref{main} for Schwartz solutions to \eqref{NLS}. For $T\in (0,\infty]$, the norm is defined by
\begin{equation}\label{X}
\|u\|_{X(T)}:=\sup\limits_{t\in(0,T)} t^{d(\frac{1}{2}-\frac{1}{r})}\|u(t)\|_{L_x^r(\mathbb{R}^d)}.
\end{equation}
A quick calculation shows that $(2,\frac{2d}{d-2})$ and $(\frac{4r(d+4)}{d[2(d-2)-(d-4)r]},\frac{(d+4)r}{dr-(d-2)})$ are Schr\"odinger-admissible pairs for $2<r<\frac{2d}{d-2}$, by the global spacetime bound \eqref{p,q bound}, we have
\begin{equation}\label{Global}
\|u\|_{L_t^2L_x^\frac{2d}{d-2}(\mathbb{R}\times\mathbb{R}^d)}+\|u\|_{L_t^\frac{4r(d+4)}{d[2(d-2)-(d-4)r]}L_x^\frac{(d+4)r}{dr-(d-2)}(\mathbb{R}\times\mathbb{R}^d)}\leq C(\|u_0\|_{L_x^2}).
\end{equation}
Let $\eta>0$ be a small parameter to be chosen later, depending only on universal constants. By \eqref{Global}, it implies that we may decompose $[0,\infty)$ into $J=J(\|u_0\|_{L_x^2},\eta)$ many intervals $I_j=[T_{j-1},T_j)$, $1\leq j\leq J$ on which
\begin{equation}\label{small eta}
\|u\|_{L_t^2L_x^\frac{2d}{d-2}(I_j\times\mathbb{R}^d)}<\eta \text{ and } \|u\|_{L_t^\frac{4r(d+4)}{d[2(d-2)-(d-4)r]}L_x^\frac{(d+4)r}{dr-(d-2)}(I_j\times\mathbb{R}^d)}<\eta.
\end{equation}
We aim to prove that for each $1\leq j\leq J$, it holds
\begin{equation}\label{bootstrap}
\|u\|_{X(T_j)}\lesssim \|u_0\|_{L_x^{r'}}+C(\|u_0\|_{L_x^2})[\|u\|_{X(T_{j-1})}+\eta^{c(r)}\|u\|_{X(T_j)}],
\end{equation}
where $c(r)>0$ is a constant depending only on $r$. Choosing $\eta>0$ sufficiently small to defeat the absolute implicit constant in \eqref{bootstrap} and $C(\|u_0\|_{L_x^2})$, we readily get
\begin{equation*}
\|u\|_{X(\infty)}\lesssim C(\|u_0\|_{L_x^2}) \|u_0\|_{L_x^{r'}},
\end{equation*}
which yields \eqref{main}.

Now we focus on \eqref{bootstrap}. Fix $1\leq j\leq J$. For any $t\in (0,T_j)$, we decompose
\begin{equation}\label{split}
\begin{aligned}
u(t)=e^{it\Delta}u_0&-i\int_0^{\frac{t}{2}\wedge A}e^{i(t-s)\Delta} F(u(s))ds-i\int_{\frac{t}{2}\wedge A}^\frac{t}{2}e^{i(t-s)\Delta} F(u(s))ds\\
&-i\int_\frac{t}{2}^{\frac{t}{2}\vee (t-B)}e^{i(t-s)\Delta} F(u(s))ds-i\int_{\frac{t}{2}\vee (t-B)}^te^{i(t-s)\Delta} F(u(s))ds,
\end{aligned}
\end{equation}
where $A,B>0$ are constants to be chosen later. In fact, each of these four integrals will be further partitioned into $s\in (0,T_{j-1})$ or $s\in [T_{j-1}, T_j)$. Although we employ the same technique for the two parts, we must separate them because we will eventually choose different parameters $A$ and $B$ for $s\in (0,T_{j-1})$ and $s\in [T_{j-1}, T_j)$, which is essential for obtaining $\eta^{c(r)}$ in \eqref{bootstrap}.
\vskip 0.5cm
By the dispersive estimate \eqref{classical decay}, the contribution of the first term (i.e., the linear term) on the right hand side of \eqref{split} is immediately seen to be acceptable
\begin{equation}
\|e^{it\Delta}u_0\|_{L_x^r}\lesssim |t|^{-d(\frac{1}{2}-\frac{1}{r})}\|u_0\|_{L_x^{r'}}.
\end{equation}
Therefore, we employ Lemma \ref{key lemma} to estimate the contribution of nonlinearity
on the right side hand of \eqref{split}.  Note that $|t-s|\sim |t|$ whenever $s\in [0,\frac{t}{2}]$ and $|t|\sim |s|$ whenever $s\in [\frac{t}{2},t]$.

For the second term, we obtain
\begin{equation}\label{Second term}
\begin{aligned}
\left\|\int_0^{\frac{t}{2}\wedge A}e^{i(t-s)\Delta} F(u(s))ds\right\|_{L_x^r}&\lesssim  \left\||t-s|^{-d(\frac{1}{2}-\frac{1}{r})}|u(s)|^\frac{d+4}{d}\right\|_{L^\frac{pr}{pr-2}_sL_x^\frac{qr}{qr-2}([0,\frac{t}{2}\wedge A]\times\mathbb{R}^d)}\\
&\lesssim |t|^{-d(\frac{1}{2}-\frac{1}{r})}\|u\|^\frac{d+4}{d}_{L^{\frac{pr}{pr-2}\cdot \frac{d+4}{d}}_sL_x^{\frac{qr}{qr-2}\cdot\frac{d+4}{d}}([0,\frac{t}{2}\wedge A]\times\mathbb{R}^d)},
\end{aligned}
\end{equation}
where $(p,q)$ is a Schr\"odinger-admissible pair. Here we choose $(p,q)=(2,\frac{2d}{d-2})$ so that $2\leq \frac{qr}{qr-2}\cdot\frac{d+4}{d}=\frac{(d+4)r}{dr-(d-2)}\leq \frac{2d}{d-2}$ and then $\frac{pr}{pr-2}\cdot \frac{d+4}{d}=\frac{r(d+4)}{d(r-1)}$. By
\begin{equation*}
\frac{d(r-1)}{r(d+4)}=\frac{d[2(d-2)-(d-4)r]}{4r(d+4)}+\frac{d^2(r-2)}{4r(d+4)},
\end{equation*}
along with H\"older's inequality in \eqref{Second term}, we obtain
\begin{equation}\label{second term}
\begin{aligned}
\left\|\int_0^{\frac{t}{2}\wedge A}e^{i(t-s)\Delta} F(u(s))ds\right\|_{L_x^r}
&\lesssim |t|^{-d(\frac{1}{2}-\frac{1}{r})}\|u\|^\frac{d+4}{d}_{L^\frac{r(d+4)}{d(r-1)}_sL_x^\frac{(d+4)r}{dr-(d-2)}([0,\frac{t}{2}\wedge A]\times\mathbb{R}^d)}\\
&\lesssim |t|^{-d(\frac{1}{2}-\frac{1}{r})}A^\frac{d(r-2)}{4r}\|u\|^\frac{d+4}{d}_{L^\frac{4r(d+4)}{d[2(d-2)-(d-4)r]}_sL_x^\frac{(d+4)r}{dr-(d-2)}([0,\frac{t}{2}\wedge A]\times\mathbb{R}^d)}.
\end{aligned}
\end{equation}
From \eqref{Global} and \eqref{second term}, we have
\begin{equation}\label{second P1}
\left\|\int_{[0,\frac{t}{2}\wedge A]\cap [0,T_{j-1})}e^{i(t-s)\Delta} F(u(s))ds\right\|_{L_x^r}\lesssim |t|^{-d(\frac{1}{2}-\frac{1}{r})}A^\frac{d(r-2)}{4r}C(\|u_0\|_{L_x^2}).
\end{equation}
From \eqref{small eta} and \eqref{second term}, we get
\begin{equation}\label{second P2}
\left\|\int_{[0,\frac{t}{2}\wedge A]\cap [T_{j-1},T_j)}e^{i(t-s)\Delta} F(u(s))ds\right\|_{L_x^r}\lesssim |t|^{-d(\frac{1}{2}-\frac{1}{r})}A^\frac{d(r-2)}{4r}\eta^\frac{d+4}{d}.
\end{equation}
For the fourth term,  by parallel methods, it follows
\begin{equation}\label{fourth term}
\begin{aligned}
&\quad\left\|\int_\frac{t}{2}^{\frac{t}{2}\vee (t-B)}e^{i(t-s)\Delta} F(u(s))ds\right\|_{L_x^r}\\
&\lesssim  \left\||t-s|^{-d(\frac{1}{2}-\frac{1}{r})}|u(s)|^\frac{d+4}{d}\right\|_{L^\frac{r}{r-1}_sL_x^\frac{dr}{dr-(d-2)}([\frac{t}{2},\frac{t}{2}\vee (t-B)]\times\mathbb{R}^d)}\\
&=  \left\||t-s|^{-\frac{d^2}{d+4}(\frac{1}{2}-\frac{1}{r})}|u(s)|\right\|^\frac{d+4}{d}_{L^\frac{r(d+4)}{d(r-1)}_sL_x^\frac{(d+4)r}{dr-(d-2)}([\frac{t}{2},\frac{t}{2}\vee (t-B)]\times\mathbb{R}^d)}\\
&\lesssim \left\||t-s|^{-\frac{d^2}{d+4}(\frac{1}{2}-\frac{1}{r})}\right\|^\frac{d+4}{d}_{L_s^\frac{4r(d+4)}{d^2(r-2)}[\frac{t}{2},\frac{t}{2}\vee (t-B)]}\|u\|^\frac{d+4}{d}_{L^\frac{4r(d+4)}{d[2(d-2)-(d-4)r]}_sL_x^\frac{(d+4)r}{dr-(d-2)}([\frac{t}{2},\frac{t}{2}\vee (t-B)]\times\mathbb{R}^d)}\\
&=\left(\int_\frac{t}{2}^{\frac{t}{2}\vee (t-B)}|t-s|^{-2}ds\right)^\frac{d(r-2)}{4r}\|u\|^\frac{d+4}{d}_{L^\frac{4r(d+4)}{d[2(d-2)-(d-4)r]}_sL_x^\frac{(d+4)r}{dr-(d-2)}([\frac{t}{2},\frac{t}{2}\vee (t-B)]\times\mathbb{R}^d)}\\
&\lesssim B^{-\frac{d(r-2)}{4r}}\|u\|^\frac{d+4}{d}_{L^\frac{4r(d+4)}{d[2(d-2)-(d-4)r]}_sL_x^\frac{(d+4)r}{dr-(d-2)}([\frac{t}{2},\frac{t}{2}\vee (t-B)]\times\mathbb{R}^d)}.
\end{aligned}
\end{equation}
From \eqref{Global} and \eqref{fourth term}, we have
\begin{equation}\label{fourth P1}
\left\|\int_{[\frac{t}{2},\frac{t}{2}\vee (t-B)]\cap [0,T_{j-1})}e^{i(t-s)\Delta} F(u(s))ds\right\|_{L_x^r}\lesssim B^{-\frac{d(r-2)}{4r}}C(\|u_0\|_{L_x^2}).
\end{equation}
From \eqref{small eta} and \eqref{fourth term}, we get
\begin{equation}\label{fourth P2}
\left\|\int_{[\frac{t}{2},\frac{t}{2}\vee (t-B)]\cap [T_{j-1},T_j)}e^{i(t-s)\Delta} F(u(s))ds\right\|_{L_x^r}\lesssim B^{-\frac{d(r-2)}{4r}}\eta^\frac{d+4}{d}.
\end{equation}
Now we turn to the third term and the last term. As \eqref{Second term}, we obtain
\begin{equation}\label{Third term}
\begin{aligned}
\left\|\int_{\frac{t}{2}\wedge A}^\frac{t}{2} e^{i(t-s)\Delta} F(u(s))ds\right\|_{L_x^r}&\lesssim  \left\||t-s|^{-d(\frac{1}{2}-\frac{1}{r})}|u(s)|^\frac{d+4}{d}\right\|_{L^\frac{pr}{pr-2}_sL_x^\frac{qr}{qr-2}([\frac{t}{2}\wedge A,\frac{t}{2}]\times\mathbb{R}^d)}\\
&\lesssim |t|^{-d(\frac{1}{2}-\frac{1}{r})}\left\|\|u(s)\|_{L_x^{\frac{qr}{qr-2}\cdot\frac{d+4}{d}}}\right\|^\frac{d+4}{d}_{L^{\frac{pr}{pr-2}\cdot \frac{d+4}{d}}_s[\frac{t}{2}\wedge A,\frac{t}{2}]},
\end{aligned}
\end{equation}
and
\begin{equation}\label{Last term}
\begin{aligned}
\left\|\int_{\frac{t}{2}\vee (t-B)}^te^{i(t-s)\Delta} F(u(s))ds\right\|_{L_x^r}&\lesssim  \left\||t-s|^{-d(\frac{1}{2}-\frac{1}{r})}|u(s)|^\frac{d+4}{d}\right\|_{L^\frac{pr}{pr-2}_sL_x^\frac{qr}{qr-2}([\frac{t}{2}\vee (t-B),t]\times\mathbb{R}^d)}\\
&\lesssim  \left\||t-s|^{-d(\frac{1}{2}-\frac{1}{r})}\|u(s)\|^\frac{d+4}{d}_{L_x^{\frac{qr}{qr-2}\cdot \frac{d+4}{d}}}\right\|_{L^{\frac{pr}{pr-2}}_s[\frac{t}{2}\vee (t-B),t]},
\end{aligned}
\end{equation}
where $(p,q)$ is a Schr\"odinger-admissible pair. We divide $r$ into three subcases: $\frac{2d(d-1)}{d^2-2d+4}\vee 2<r<\frac{2(d+1)}{d}$, $\frac{2(d+1)}{d}\leq r\leq \frac{2(d+2)}{d}$ and $\frac{2(d+2)}{d}<r<\frac{2d}{d-2}$. Note that when $\frac{2d(d-1)}{d^2-2d+4}\vee 2<r<\frac{2(d+1)}{d}$, $\frac{qr}{qr-2}\cdot\frac{d+4}{d}>r$ holds for any Schr\"odinger-admissible pair $(p,q)$; when $\frac{2(d+2)}{d}<r<\frac{2d}{d-2}$, $\frac{qr}{qr-2}\cdot\frac{d+4}{d}<r$ holds for any Schr\"odinger-admissible pair $(p,q)$; when $\frac{2(d+1)}{d}\leq r\leq \frac{2(d+2)}{d}$, there exists a unique Schr\"odinger-admissible pair $(p,q)$ such that $\frac{qr}{qr-2}\cdot\frac{d+4}{d}=r$.\\

\textbf{\underline{Case 1.}} $\frac{2d(d-1)}{d^2-2d+4}\vee 2<r<\frac{2(d+1)}{d}$. In this case, we take $(p,q)=(2,\frac{2d}{d-2})$ and we can see $r<\frac{qr}{qr-2}\cdot\frac{d+4}{d}=\frac{(d+4)r}{dr-(d-2)}<\frac{2d}{d-2}$ and $\frac{pr}{pr-2}\cdot \frac{d+4}{d}=\frac{r(d+4)}{d(r-1)}$. Then we have
\begin{equation}\label{Holder 1}
\|u(s)\|_{L_x^\frac{(d+4)r}{dr-(d-2)}}\leq \|u(s)\|^{1-\theta}_{L_x^\frac{2d}{d-2}}\|u(s)\|^\theta_{L_x^r},
\end{equation}
where
\begin{equation*}
\theta=\frac{(d^2-2d+8)r-2d(d-2)}{(d+4)[2d-(d-2)r]} \text{ and } 1-\theta=\frac{2d[2(d+1)-dr]}{(d+4)[2d-(d-2)r]}.
\end{equation*}
For the third term, it follows from \eqref{X}, \eqref{Third term} and \eqref{Holder 1} that
\begin{equation}\label{third term}
\begin{aligned}
&\quad\left\|\int_{[\frac{t}{2}\wedge A, \frac{t}{2}]\cap [0,T_{j-1})} e^{i(t-s)\Delta} F(u(s))ds\right\|_{L_x^r}\\
&\lesssim |t|^{-d(\frac{1}{2}-\frac{1}{r})}\left\|\|u(s)\|_{L_x^{\frac{(d+4)r}{dr-(d-2)}}}\right\|^\frac{d+4}{d}_{L^{\frac{(d+4)r}{d(r-1)}}_s\left([\frac{t}{2}\wedge A, \frac{t}{2}]\cap [0,T_{j-1})\right)}\\
&\leq |t|^{-d(\frac{1}{2}-\frac{1}{r})}\left\|\|u(s)\|^{1-\theta}_{L_x^\frac{2d}{d-2}}\|u(s)\|^\theta_{L_x^r}\right\|^\frac{d+4}{d}_{L^{\frac{(d+4)r}{d(r-1)}}_s\left([\frac{t}{2}\wedge A, \frac{t}{2}]\cap [0,T_{j-1})\right)}\\
&\lesssim |t|^{-d(\frac{1}{2}-\frac{1}{r})}\|u\|^{\frac{d+4}{d}\theta}_{X(T_{j-1})}\left\|\|u(s)\|^{1-\theta}_{L_x^\frac{2d}{d-2}}|s|^{-d(\frac{1}{2}-\frac{1}{r})\theta}\right\|^\frac{d+4}{d}_{L^{\frac{(d+4)r}{d(r-1)}}_s\left([\frac{t}{2}\wedge A, \frac{t}{2}]\cap [0,T_{j-1})\right)}\\
&=|t|^{-d(\frac{1}{2}-\frac{1}{r})}\|u\|^{\frac{d+4}{d}\theta}_{X(T_{j-1})}\left\|\|u(s)\|_{L_x^\frac{2d}{d-2}}|s|^{-d(\frac{1}{2}-\frac{1}{r})\frac{\theta}{{1-\theta}}}\right\|^{\frac{d+4}{d}(1-\theta)}_{L^{\frac{(d+4)r}{d(r-1)}(1-\theta)}_s\left([\frac{t}{2}\wedge A, \frac{t}{2}]\cap [0,T_{j-1})\right)}.
\end{aligned}
\end{equation}
Note that
\begin{equation*}
2-\frac{(d+4)r}{d(r-1)}(1-\theta)=2-\frac{2r[2(d+1)-dr]}{(r-1)[2d-(d-2)r]}=\frac{2(r-2)(2r+d)}{(r-1)[2d-(d-2)r]}>0,
\end{equation*}
which means that $0<\frac{(d+4)r}{d(r-1)}(1-\theta)<2$. Let $\alpha=\frac{2r[2(d+1)-dr]}{(r-2)(2r+d)}>0$ satisfy
\begin{equation*}
\frac{1}{\frac{(d+4)r}{d(r-1)}(1-\theta)}=\frac{1}{2}+\frac{1}{\alpha}
\end{equation*}
and $\alpha d(\frac{1}{2}-\frac{1}{r})\frac{\theta}{{1-\theta}}-1=\frac{(d^2-2d+4)r-2d(d-1)}{2(2r+d)}>0$, i.e., $\alpha d(\frac{1}{2}-\frac{1}{r})\frac{\theta}{{1-\theta}}>1$.
By \eqref{Global}, using H\"older's inequality in \eqref{third term}, it yields
\begin{equation}\label{case-1 p1}
\begin{aligned}
&\quad\left\|\int_{[\frac{t}{2}\wedge A, \frac{t}{2}]\cap [0,T_{j-1})} e^{i(t-s)\Delta} F(u(s))ds\right\|_{L_x^r}\\
&\lesssim |t|^{-d(\frac{1}{2}-\frac{1}{r})}\|u\|^{\frac{d+4}{d}\theta}_{X(T_{j-1})}\|u\|^{\frac{d+4}{d}(1-\theta)}_{L_s^2L_x^\frac{2d}{d-2}\left(\left([\frac{t}{2}\wedge A, \frac{t}{2}]\cap [0,T_{j-1})\right)\times\mathbb{R}^d\right)}\left\||s|^{-d(\frac{1}{2}-\frac{1}{r})\frac{\theta}{{1-\theta}}}\right\|^{\frac{d+4}{d}(1-\theta)}_{L^\alpha_s\left([\frac{t}{2}\wedge A, \frac{t}{2}]\cap [0,T_{j-1})\right)}\\
&\lesssim |t|^{-d(\frac{1}{2}-\frac{1}{r})}\|u\|^{\frac{d+4}{d}\theta}_{X(T_{j-1})}\|u\|^{\frac{d+4}{d}(1-\theta)}_{L_s^2L_x^\frac{2d}{d-2}\left(\left([\frac{t}{2}\wedge A, \frac{t}{2}]\cap [0,T_{j-1})\right)\times\mathbb{R}^d\right)}\left\||s|^{-d(\frac{1}{2}-\frac{1}{r})\frac{\theta}{{1-\theta}}}\right\|^{\frac{d+4}{d}(1-\theta)}_{L^\alpha_s\left([\frac{t}{2}\wedge A, \frac{t}{2}]\cap [0,T_{j-1})\right)}\\
&\lesssim |t|^{-d(\frac{1}{2}-\frac{1}{r})}\|u\|^{\frac{d+4}{d}\theta}_{X(T_{j-1})}\|u\|^{\frac{d+4}{d}(1-\theta)}_{L_s^2L_x^\frac{2d}{d-2}\left(\left([\frac{t}{2}\wedge A, \frac{t}{2}]\cap [0,T_{j-1})\right)\times\mathbb{R}^d\right)}\left(\int_{\frac{t}{2}\wedge A}^\frac{t}{2}|s|^{-\alpha d(\frac{1}{2}-\frac{1}{r})\frac{\theta}{{1-\theta}}}ds\right)^{\frac{d+4}{d\alpha}(1-\theta)}\\
&\lesssim |t|^{-d(\frac{1}{2}-\frac{1}{r})}\|u\|^{\frac{d+4}{d}\theta}_{X(T_{j-1})}\|u\|^{\frac{d+4}{d}(1-\theta)}_{L_s^2L_x^\frac{2d}{d-2}\left(\left([\frac{t}{2}\wedge A, \frac{t}{2}]\cap [0,T_{j-1})\right)\times\mathbb{R}^d\right)}A^{-\frac{(d^2-2d+4)r-2d(d-1)}{2(2r+d)}\cdot \frac{d+4}{d\alpha}(1-\theta)}\\
&\lesssim |t|^{-d(\frac{1}{2}-\frac{1}{r})}\|u\|^{\frac{(d^2-2d+8)r-2d(d-2)}{d[2d-(d-2)r]} }_{X(T_{j-1})}\|u\|^{\frac{2[2(d+1)-dr]}{2d-(d-2)r}}_{L_s^2L_x^\frac{2d}{d-2}\left([0,T_{j-1})\times\mathbb{R}^d\right)}A^{-\frac{(r-2)[(d^2-2d+4)r-2d(d-1)]}{2r[2d-(d-2)r]}}\\
&\lesssim |t|^{-d(\frac{1}{2}-\frac{1}{r})}\|u\|^{\frac{(d^2-2d+8)r-2d(d-2)}{d[2d-(d-2)r]} }_{X(T_{j-1})}A^{-\frac{(r-2)[(d^2-2d+4)r-2d(d-1)]}{2r[2d-(d-2)r]}} C(\|u_0\|_{L^2_x}).
\end{aligned}
\end{equation}
Argued analogously as \eqref{case-1 p1}, by \eqref{small eta}, we also have
\begin{equation}\label{case-1 p2}
\begin{aligned}
&\quad\left\|\int_{[\frac{t}{2}\wedge A, \frac{t}{2}]\cap [T_{j-1},T_j)} e^{i(t-s)\Delta} F(u(s))ds\right\|_{L_x^r}\\
&\lesssim |t|^{-d(\frac{1}{2}-\frac{1}{r})}\|u\|^{\frac{(d^2-2d+8)r-2d(d-2)}{d[2d-(d-2)r]} }_{X(T_j)}\|u\|^{\frac{2[2(d+1)-dr]}{2d-(d-2)r}}_{L_s^2L_x^\frac{2d}{d-2}\left([T_{j-1},T_j)\times\mathbb{R}^d\right)}A^{-\frac{(r-2)[(d^2-2d+4)r-2d(d-1)]}{2r[2d-(d-2)r]}}\\
&\lesssim |t|^{-d(\frac{1}{2}-\frac{1}{r})}\|u\|^{\frac{(d^2-2d+8)r-2d(d-2)}{d[2d-(d-2)r]} }_{X(T_j)}A^{-\frac{(r-2)[(d^2-2d+4)r-2d(d-1)]}{2r[2d-(d-2)r]}} \eta^{\frac{2[2(d+1)-dr]}{2d-(d-2)r}}.
\end{aligned}
\end{equation}

Now we optimize in $A$ the bounds which are obtained for the second and third terms on the right side hand \eqref{split}. Choose $A=\|u\|_{X(T_{j-1})}^\frac{4r}{d(r-2)}$ to minimize the sum of the right hand of \eqref{second P1} and \eqref{case-1 p1}, which yields
\begin{equation}\label{case 1-1}
\left\|\int_{[0,\frac{t}{2}]\cap [0,T_{j-1})}e^{i(t-s)\Delta}F(u(s))\right\|_{L_x^r}\lesssim |t|^{-d(\frac{1}{2}-\frac{1}{r})} C(\|u_0\|_{L_x^2})\|u\|_{X(T_{j-1})}.
\end{equation}
Similarly, choose $A=\left(\|u\|_{X(T_j)}\eta^{-1}\right)^\frac{4r}{d(r-2)}$ to minimize the sum of the right hand side of \eqref{second P2} and  \eqref{case-1 p2}, which yields
\begin{equation}\label{case 1-2}
\left\|\int_{[0,\frac{t}{2}]\cap [T_{j-1},T_j)}e^{i(t-s)\Delta}F(u(s))\right\|_{L_x^r} \lesssim |t|^{-d(\frac{1}{2}-\frac{1}{r})} \eta^\frac{4}{d}\|u\|_{X(T_j)}.
\end{equation}

It follows from \eqref{case 1-1} and \eqref{case 1-2} that
\begin{equation}\label{case 1}
\left\|\int_0^\frac{t}{2}e^{i(t-s)\Delta}F(u(s))\right\|_{L_x^r}\lesssim |t|^{-d(\frac{1}{2}-\frac{1}{r})} C(\|u_0\|_{L_x^2})\left(\|u\|_{X(T_{j-1})}+\eta^{c(r)}\|u\|_{X(T_j)}\right),
\end{equation}
with $c(r)=\frac{4}{d}$.

For the last term, by \eqref{X}, \eqref{Global}, \eqref{Last term} and \eqref{Holder 1} , we have
\begin{equation}\label{Case-1 P1}
\begin{aligned}
&\quad\left\|\int_{[\frac{t}{2}\vee (t-B),t]\cap [0,T_{j-1})} e^{i(t-s)\Delta} F(u(s))ds\right\|_{L_x^r}\\
&\lesssim \left\||t-s|^{-d(\frac{1}{2}-\frac{1}{r})}\|u(s)\|^\frac{d+4}{d}_{L_x^{\frac{(d+4)r}{dr-(d-2)}}}\right\|_{L^{\frac{r}{r-1}}_s\left([\frac{t}{2}\vee (t-B),t]\cap [0,T_{j-1})\right)}\\
&\leq \left\||t-s|^{-d(\frac{1}{2}-\frac{1}{r})}\|u(s)\|^{\frac{d+4}{d}(1-\theta)}_{L_x^\frac{2d}{d-2}}\|u(s)\|^{\frac{d+4}{d}\theta}_{L_x^r}\right\|_{L^{\frac{r}{r-1}}_s\left([\frac{t}{2}\vee (t-B),t]\cap [0,T_{j-1})\right)}\\
&\leq \left\||t-s|^{-d(\frac{1}{2}-\frac{1}{r})}\|u(s)\|^{\frac{d+4}{d}(1-\theta)}_{L_x^\frac{2d}{d-2}}\|u\|^{\frac{d+4}{d}\theta}_{X(T_{j-1})}|s|^{-(d+4)(\frac{1}{2}-\frac{1}{r})\theta}\right\|_{L^{\frac{r}{r-1}}_s\left([\frac{t}{2}\vee (t-B),t]\cap [0,T_{j-1})\right)}\\
&\lesssim|t|^{-(d+4)(\frac{1}{2}-\frac{1}{r})\theta}\|u\|^{\frac{d+4}{d}\theta}_{X(T_{j-1})}\left\|\|u(s)\|_{L_x^\frac{2d}{d-2}}|t-s|^{-\frac{d^2}{(d+4)(1-\theta)}(\frac{1}{2}-\frac{1}{r})}\right\|^{\frac{d+4}{d}(1-\theta)}_{L^{\frac{(d+4)r}{d(r-1)}(1-\theta)}_s\left([\frac{t}{2}\vee (t-B),t]\cap [0,T_{j-1})\right)}\\
&\leq |t|^{-(d+4)(\frac{1}{2}-\frac{1}{r})\theta}\|u\|^{\frac{d+4}{d}\theta}_{X(T_{j-1})}\|u\|^{\frac{d+4}{d}(1-\theta)}_{L_s^2L_x^\frac{2d}{d-2}\left([0,T_{j-1})\times\mathbb{R}^d\right)}\left\||t-s|^{-\frac{d^2}{(d+4)(1-\theta)}(\frac{1}{2}-\frac{1}{r})}\right\|^{\frac{d+4}{d}(1-\theta)}_{L^\alpha_s[\frac{t}{2}\vee (t-B),t]}\\
&= |t|^{-(d+4)(\frac{1}{2}-\frac{1}{r})\theta}\|u\|^{\frac{d+4}{d}\theta}_{X(T_{j-1})}\|u\|^{\frac{d+4}{d}(1-\theta)}_{L_s^2L_x^\frac{2d}{d-2}\left([0,T_{j-1})\times\mathbb{R}^d\right)}\left(\int_{\frac{t}{2}\vee (t-B)}^t|t-s|^{^{-\frac{d^2\alpha}{(d+4)(1-\theta)}(\frac{1}{2}-\frac{1}{r})}}ds\right)^{\frac{d+4}{d\alpha}(1-\theta)}\\
&=|t|^{-\frac{(r-2)[(d^2-2d+8)r-2d(d-2)]}{2r[2d-(d-2)r]}}\|u\|^{\frac{(d^2-2d+8)r-2d(d-2)}{d[2d-(d-2)r]} }_{X(T_{j-1})}\|u\|^{\frac{2[2(d+1)-dr]}{2d-(d-2)r}}_{L_s^2L_x^\frac{2d}{d-2}\left([0,T_{j-1})\times\mathbb{R}^d\right)}\\
&\qquad \qquad\qquad \qquad\times\left(\int_{\frac{t}{2}\vee (t-B)}^t|t-s|^{\frac{(d^2-2d+4)r-2d(d-1)}{2(2r+d)}-1}ds\right)^{\frac{(r-2)(2r+d)}{r[2d-(d-2)r]}}\\
&\lesssim|t|^{-\frac{(r-2)[(d^2-2d+8)r-2d(d-2)]}{2r[2d-(d-2)r]}}\|u\|^{\frac{(d^2-2d+8)r-2d(d-2)}{d[2d-(d-2)r]} }_{X(T_{j-1})}\|u\|^{\frac{2[2(d+1)-dr]}{2d-(d-2)r}}_{L_s^2L_x^\frac{2d}{d-2}\left([0,T_{j-1})\times\mathbb{R}^d\right)} B^{\frac{(r-2)[(d^2-2d+4)r-2d(d-1)]}{2r[2d-(d-2)r]}}\\
&\leq |t|^{-\frac{(r-2)[(d^2-2d+8)r-2d(d-2)]}{2r[2d-(d-2)r]}}\|u\|^{\frac{(d^2-2d+8)r-2d(d-2)}{d[2d-(d-2)r]} }_{X(T_{j-1})}B^{\frac{(r-2)[(d^2-2d+4)r-2d(d-1)]}{2r[2d-(d-2)r]}} C(\|u_0\|_{L^2_x}).
\end{aligned}
\end{equation}
Argued analogously as \eqref{Case-1 P1}, by \eqref{small eta}, we also have
\begin{equation}\label{Case-1 P2}
\begin{aligned}
&\quad\left\|\int_{[\frac{t}{2}\vee (t-B),t]\cap [T_{j-1},T_j)} e^{i(t-s)\Delta} F(u(s))ds\right\|_{L_x^r}\\
&\leq |t|^{-\frac{(r-2)[(d^2-2d+8)r-2d(d-2)]}{2r[2d-(d-2)r]}}\|u\|^{\frac{(d^2-2d+8)r-2d(d-2)}{d[2d-(d-2)r]} }_{X(T_{j})}\|u\|^{\frac{2[2(d+1)-dr]}{2d-(d-2)r}}_{L_s^2L_x^\frac{2d}{d-2}\left([T_{j-1},T_j)\times\mathbb{R}^d\right)} B^{\frac{(r-2)[(d^2-2d+4)r-2d(d-1)]}{2r[2d-(d-2)r]}}\\
&\leq |t|^{-\frac{(r-2)[(d^2-2d+8)r-2d(d-2)]}{2r[2d-(d-2)r]}}\|u\|^{\frac{(d^2-2d+8)r-2d(d-2)}{d[2d-(d-2)r]} }_{X(T_{j})}B^{\frac{(r-2)[(d^2-2d+4)r-2d(d-1)]}{2r[2d-(d-2)r]}} \eta^{\frac{2[2(d+1)-dr]}{2d-(d-2)r}}.
\end{aligned}
\end{equation}

Finally, we choose $B$ to minimize the bounds obtained for the fourth and last terms on the right side hand \eqref{split}. Choose $B=|t|^2\|u\|_{X(T_{j-1})}^{-\frac{4r}{d(r-2)}}$ to minimize the sum of the right hand side of \eqref{fourth P1} and \eqref{Case-1 P1}, which yields
\begin{equation}\label{Case-1-1}
\left\|\int_{[\frac{t}{2},t]\cap [0,T_{j-1})} e^{i(t-s)\Delta} F(u(s))ds\right\|_{L_x^r}\lesssim |t|^{-d(\frac{1}{2}-\frac{1}{r})} C(\|u_0\|_{L_x^2})\|u\|_{X(T_{j-1})}.
\end{equation}
Similarly, choose $B=|t|^2\left(\|u\|_{X(T_j)}\eta^{-1}\right)^{-\frac{4r}{d(r-2)}}$ to minimize the sum of the right hand side of \eqref{fourth P2} and \eqref{Case-1 P2}, which yields
\begin{equation}\label{Case-1-2}
\left\|\int_{[\frac{t}{2},t]\cap [T_{j-1},T_j)} e^{i(t-s)\Delta} F(u(s))ds\right\|_{L_x^r}\lesssim |t|^{-d(\frac{1}{2}-\frac{1}{r})} \eta^\frac{4}{d}\|u\|_{X(T_j)}.
\end{equation}
Collecting the bounds \eqref{Case-1-1} and \eqref{Case-1-2}, we have
\begin{equation}\label{Case 1}
\left\|\int_\frac{t}{2}^te^{i(t-s)\Delta}F(u(s))\right\|_{L_x^r}\lesssim |t|^{-d(\frac{1}{2}-\frac{1}{r})} C(\|u_0\|_{L_x^2})\left(\|u\|_{X(T_{j-1})}+\eta^{c(r)}\|u\|_{X(T_j)}\right),
\end{equation}
with $c(r)=\frac{4}{d}$.

\textbf{\underline{Case 2.}} $\frac{2(d+1)}{d}\leq r\leq\frac{2(d+2)}{d}$.
In this case, we take $(p,q)=(\frac{4}{2(d+2)-dr},\frac{2d}{dr-(d+4)})$ and we can see $\frac{qr}{qr-2}\cdot\frac{d+4}{d}=r$ and $\frac{pr}{pr-2}=\frac{2r}{(d+2)(r-2)}$.
For the third term, it follows from \eqref{X} and \eqref{Third term} that
\begin{equation}\label{case-2 p1}
\begin{aligned}
&\quad\left\|\int_{[\frac{t}{2}\wedge A, \frac{t}{2}]\cap [0,T_{j-1})} e^{i(t-s)\Delta} F(u(s))ds\right\|_{L_x^r}\\
&\lesssim |t|^{-d(\frac{1}{2}-\frac{1}{r})}\left\|\|u(s)\|_{L_x^r}\right\|^\frac{d+4}{d}_{L^\frac{2(d+4)r}{d(d+2)(r-2)}_s\left([\frac{t}{2}\wedge A, \frac{t}{2}]\cap [0,T_{j-1})\right)}\\
&\lesssim |t|^{-d(\frac{1}{2}-\frac{1}{r})}\|u\|^{\frac{d+4}{d}}_{X(T_{j-1})}\left\||s|^{-d(\frac{1}{2}-\frac{1}{r})}\right\|^\frac{d+4}{d}_{L^{\frac{2(d+4)r}{d(d+2)(r-2)}}_s\left([\frac{t}{2}\wedge A, \frac{t}{2}]\cap [0,T_{j-1})\right)}\\
&\lesssim |t|^{-d(\frac{1}{2}-\frac{1}{r})}\|u\|^{\frac{d+4}{d}}_{X(T_{j-1})}\left(\int_{\frac{t}{2}\wedge A}^\frac{t}{2}|s|^{-\frac{2}{d+2}-1} ds\right)^\frac{(d+2)(r-2)}{2r}\\
&\lesssim |t|^{-d(\frac{1}{2}-\frac{1}{r})}\|u\|^{\frac{d+4}{d}}_{X(T_{j-1})}A^{-\frac{r-2}{r}}.
\end{aligned}
\end{equation}
Argued analogously as \eqref{case-2 p1}, we also have
\begin{equation}\label{case-2 p2}
\left\|\int_{[\frac{t}{2}\wedge A, \frac{t}{2}]\cap [T_{j-1},T_j)} e^{i(t-s)\Delta} F(u(s))ds\right\|_{L_x^r}\lesssim |t|^{-d(\frac{1}{2}-\frac{1}{r})}\|u\|^{\frac{d+4}{d}}_{X(T_j)}A^{-\frac{r-2}{r}}.
\end{equation}

Choose $A=\|u\|_{X(T_{j-1})}^\frac{4r}{d(r-2)}$ to minimize the sum of the right hand side of \eqref{second P1} and \eqref{case-2 p1}, which yields
\begin{equation}\label{case-2-1}
\left\|\int_{[0,\frac{t}{2}]\cap [0,T_{j-1})}e^{i(t-s)\Delta}F(u(s))\right\|_{L_x^r}\lesssim |t|^{-d(\frac{1}{2}-\frac{1}{r})} C(\|u_0\|_{L_x^2})\|u\|_{X(T_{j-1})}.
\end{equation}
Similarly, choose $A=\left(\|u\|_{X(T_j)}\eta^{-1}\right)^\frac{4r}{d(r-2)}$ to minimize the sum of the right hand side of \eqref{second P2} and  \eqref{case-2 p2}, which yields
\begin{equation}\label{case-2-2}
\left\|\int_{[0,\frac{t}{2}]\cap [T_{j-1},T_j)}e^{i(t-s)\Delta}F(u(s))\right\|_{L_x^r}\lesssim |t|^{-d(\frac{1}{2}-\frac{1}{r})} \eta^\frac{4}{d}\|u\|_{X(T_j)}.
\end{equation}

It follows from \eqref{case-2-1} and \eqref{case-2-2} that
\begin{equation}\label{case 2}
\left\|\int_0^\frac{t}{2}e^{i(t-s)\Delta}F(u(s))\right\|_{L_x^r}\lesssim |t|^{-d(\frac{1}{2}-\frac{1}{r})} C(\|u_0\|_{L_x^2})\left(\|u\|_{X(T_{j-1})}+\eta^{c(r)}\|u\|_{X(T_j)}\right),
\end{equation}
with $c(r)=\frac{4}{d}$.

For the last term, it follows from \eqref{X} and \eqref{Last term} that
\begin{equation}\label{Case-2 P1}
\begin{aligned}
&\quad\left\|\int_{[\frac{t}{2}\vee (t-B),t]\cap [0,T_{j-1})} e^{i(t-s)\Delta} F(u(s))ds\right\|_{L_x^r}\\
&\lesssim \left\||t-s|^{-d(\frac{1}{2}-\frac{1}{r})}\|u(s)\|^\frac{d+4}{d}_{L_x^r}\right\|_{L^\frac{2r}{(d+2)(r-2)}_s\left([\frac{t}{2}\vee (t-B),t]\cap [0,T_{j-1})\right)}\\
&\lesssim \left\||t-s|^{-d(\frac{1}{2}-\frac{1}{r})}\|u\|^{\frac{d+4}{d}}_{X(T_{j-1})}|s|^{-(d+4)(\frac{1}{2}-\frac{1}{r})}\right\|_{L^\frac{2r}{(d+2)(r-2)}_s\left([\frac{t}{2}\vee (t-B),t]\cap [0,T_{j-1})\right)}\\
&\lesssim |t|^{-(d+4)(\frac{1}{2}-\frac{1}{r})}\|u\|^{\frac{d+4}{d}}_{X(T_{j-1})}\left\||t-s|^{-d(\frac{1}{2}-\frac{1}{r})}\right\|_{L^\frac{2r}{(d+2)(r-2)}_s\left([\frac{t}{2}\vee (t-B),t]\cap [0,T_{j-1})\right)}\\
&\lesssim |t|^{-(d+4)(\frac{1}{2}-\frac{1}{r})}\|u\|^{\frac{d+4}{d}}_{X(T_{j-1})}\left(\int_{\frac{t}{2}\vee (t-B)}^t|t-s|^{\frac{2}{d+2}-1}ds\right)^\frac{(d+2)(r-2)}{2r}\\
&\lesssim |t|^{-(d+4)(\frac{1}{2}-\frac{1}{r})}\|u\|^{\frac{d+4}{d}}_{X(T_{j-1})}B^\frac{r-2}{r}.
\end{aligned}
\end{equation}
Argued analogously as \eqref{Case-2 P1}, we also have
\begin{equation}\label{Case-2 P2}
\left\|\int_{[\frac{t}{2}\vee (t-B),t]\cap [T_{j-1}, T_j)} e^{i(t-s)\Delta} F(u(s))ds\right\|_{L_x^r}\lesssim |t|^{-(d+4)(\frac{1}{2}-\frac{1}{r})}\|u\|^{\frac{d+4}{d}}_{X(T_j)}B^\frac{r-2}{r}.
\end{equation}

Choose $B=|t|^2\|u\|_{X(T_{j-1})}^{-\frac{4r}{d(r-2)}}$ to minimize the sum of the right hand side of \eqref{fourth P1} and \eqref{Case-2 P1}, which yields
\begin{equation}\label{Case-2-1}
\left\|\int_{[\frac{t}{2},t]\cap [0,T_{j-1})} e^{i(t-s)\Delta} F(u(s))ds\right\|_{L_x^r}\lesssim |t|^{-d(\frac{1}{2}-\frac{1}{r})} C(\|u_0\|_{L_x^2})\|u\|_{X(T_{j-1})}.
\end{equation}
Similarly, choose $B=|t|^2\left(\|u\|_{X(T_j)}\eta^{-1}\right)^{-\frac{4r}{d(r-2)}}$ to minimize the sum of the right hand side of \eqref{fourth P2} and \eqref{Case-2 P2}, which yields
\begin{equation}\label{Case-2-2}
\left\|\int_{[\frac{t}{2},t]\cap [T_{j-1},T_j)} e^{i(t-s)\Delta} F(u(s))ds\right\|_{L_x^r}\lesssim |t|^{-d(\frac{1}{2}-\frac{1}{r})} \eta^\frac{4}{d}\|u\|_{X(T_j)}.
\end{equation}

By \eqref{Case-2-1} and \eqref{Case-2-2}, we have
\begin{equation}\label{Case 2}
\left\|\int_\frac{t}{2}^te^{i(t-s)\Delta}F(u(s))\right\|_{L_x^r}\lesssim |t|^{-d(\frac{1}{2}-\frac{1}{r})} C(\|u_0\|_{L_x^2})\left(\|u\|_{X(T_{j-1})}+\eta^{c(r)}\|u\|_{X(T_j)}\right),
\end{equation}
with $c(r)=\frac{4}{d}$.

\textbf{\underline{Case 3.}} $\frac{2(d+2)}{d}<r<\frac{2d}{d-2}$.
In this case, we take $(p,q)=(\infty,2)$ and we can see $2<\frac{qr}{qr-2}\cdot\frac{d+4}{d}=\frac{(d+4)r}{d(r-1)}\leq r$ and $\frac{pr}{pr-2}=1$. Then we have
\begin{equation}\label{Holder 3}
\|u(s)\|_{L_x^\frac{(d+4)r}{d(r-1)}}\leq \|u(s)\|^{1-\theta}_{L_x^2}\|u(s)\|^\theta_{L_x^r},
\end{equation}
where
\begin{equation*}
\theta=\frac{2d-(d-4)r}{(d+4)(r-2)} \text{ and } 1-\theta=\frac{2dr-4(d+2)}{(d+4)(r-2)}.
\end{equation*}
For the third term, it follows from \eqref{X}, \eqref{Global}, \eqref{Third term} and \eqref{Holder 3} that
\begin{equation}\label{case-3 p1}
\begin{aligned}
&\quad\left\|\int_{[\frac{t}{2}\wedge A, \frac{t}{2}]\cap [0,T_{j-1})} e^{i(t-s)\Delta} F(u(s))ds\right\|_{L_x^r}\\
&\lesssim |t|^{-d(\frac{1}{2}-\frac{1}{r})}\left\|\|u(s)\|_{L_x^\frac{(d+4)r}{d(r-1)}}\right\|^\frac{d+4}{d}_{L^{\frac{d+4}{d}}_s\left([\frac{t}{2}\wedge A, \frac{t}{2}]\cap [0,T_{j-1})\right)}\\
&\leq |t|^{-d(\frac{1}{2}-\frac{1}{r})}\left\|\|u(s)\|^{1-\theta}_{L_x^2}\|u(s)\|^\theta_{L_x^r}\right\|^\frac{d+4}{d}_{L^{\frac{d+4}{d}}_s\left([\frac{t}{2}\wedge A, \frac{t}{2}]\cap [0,T_{j-1})\right)}\\
&\lesssim |t|^{-d(\frac{1}{2}-\frac{1}{r})}\|u\|^{\frac{d+4}{d}\theta}_{X(T_{j-1})}\left\|\|u(s)\|^{1-\theta}_{L_x^2}|s|^{-d(\frac{1}{2}-\frac{1}{r})\theta}\right\|^\frac{d+4}{d}_{L^{\frac{d+4}{d}}_s\left([\frac{t}{2}\wedge A, \frac{t}{2}]\cap [0,T_{j-1})\right)}\\
&\lesssim |t|^{-d(\frac{1}{2}-\frac{1}{r})}\|u\|^{\frac{d+4}{d}\theta}_{X(T_{j-1})}\|u\|^{\frac{d+4}{d}(1-\theta)}_{L_s^\infty L_x^2\left(\left([\frac{t}{2}\wedge A, \frac{t}{2}]\cap [0,T_{j-1})\right)\times\mathbb{R}^d\right)}\left\||s|^{-d(\frac{1}{2}-\frac{1}{r})\theta}\right\|^\frac{d+4}{d}_{L^{\frac{d+4}{d}}_s\left([\frac{t}{2}\wedge A, \frac{t}{2}]\cap [0,T_{j-1})\right)}\\
&\lesssim |t|^{-d(\frac{1}{2}-\frac{1}{r})}\|u\|^{\frac{2d-(d-4)r}{d(r-2)} }_{X(T_{j-1})}\|u\|^{\frac{2dr-4(d+2)}{d(r-2)}}_{L_s^\infty L_x^2}\int_{\frac{t}{2}\wedge A}^\frac{t}{2}|s|^{-\frac{2d-(d-2)r}{2r}-1} ds\\
&\lesssim |t|^{-d(\frac{1}{2}-\frac{1}{r})}\|u\|^{\frac{2d-(d-4)r}{d(r-2)} }_{X(T_{j-1})}\|u\|^{\frac{2dr-4(d+2)}{d(r-2)}}_{L_s^\infty L_x^2}A^{-\frac{2d-(d-2)r}{2r}}\\
&\lesssim |t|^{-d(\frac{1}{2}-\frac{1}{r})}\|u\|^{\frac{2d-(d-4)r}{d(r-2)} }_{X(T_{j-1})}A^{-\frac{2d-(d-2)r}{2r}}C(\|u_0\|_{L^2_x}).\\
\end{aligned}
\end{equation}
Argued analogously as \eqref{case-3 p1}, we also have
\begin{equation}\label{case-3 p2}
\begin{aligned}
\left\|\int_{[\frac{t}{2}\wedge A, \frac{t}{2}]\cap [T_{j-1},T_j)} e^{i(t-s)\Delta} F(u(s))ds\right\|_{L_x^r}&\lesssim |t|^{-d(\frac{1}{2}-\frac{1}{r})}\|u\|^{\frac{2d-(d-4)r}{d(r-2)} }_{X(T_j)}\|u\|^{\frac{2dr-4(d+2)}{d(r-2)}}_{L_s^\infty L_x^2}A^{-\frac{2d-(d-2)r}{2r}}\\
&\lesssim |t|^{-d(\frac{1}{2}-\frac{1}{r})}\|u\|^{\frac{2d-(d-4)r}{d(r-2)} }_{X(T_j)}A^{-\frac{2d-(d-2)r}{2r}}C(\|u_0\|_{L^2_x}).\\
\end{aligned}
\end{equation}

Choose $A=\|u\|_{X(T_{j-1})}^\frac{4r}{d(r-2)}$ to minimize the sum of the right hand side of  \eqref{second P1} and \eqref{case-3 p1}, which yields
\begin{equation}\label{case-3-1}
\left\|\int_{[0,\frac{t}{2}]\cap [0,T_{j-1})}e^{i(t-s)\Delta}F(u(s))\right\|_{L_x^r} \lesssim |t|^{-d(\frac{1}{2}-\frac{1}{r})} C(\|u_0\|_{L_x^2})\|u\|_{X(T_{j-1})}.
\end{equation}
Similarly, choose $A=\|u\|_{X(T_j)}^\frac{4r}{d(r-2)}\eta^{-\frac{4(d+4)r}{d[2d-(d-4)r]}}$ to minimize the sum of the right hand side of \eqref{second P2} and \eqref{case-3 p2}, which yields
\begin{equation}\label{case-3-2}
\left\|\int_{[0,\frac{t}{2}]\cap [T_{j-1},T_j)}e^{i(t-s)\Delta}F(u(s))\right\|_{L_x^r} \lesssim |t|^{-d(\frac{1}{2}-\frac{1}{r})} C(\|u_0\|_{L_x^2}) \eta^\frac{2(d+4)[2d-(d-2)r]}{d[2d-(d-4)r]}\|u\|_{X(T_j)}.
\end{equation}
By \eqref{case-3-1} and \eqref{case-3-2}, it implies
\begin{equation}\label{case 3}
\left\|\int_0^\frac{t}{2}e^{i(t-s)\Delta}F(u(s))\right\|_{L_x^r}\lesssim |t|^{-d(\frac{1}{2}-\frac{1}{r})} C(\|u_0\|_{L_x^2})\left(\|u\|_{X(T_{j-1})}+\eta^{c(r)}\|u\|_{X(T_j)}\right),
\end{equation}
with $c(r)=\frac{2(d+4)[2d-(d-2)r]}{d[2d-(d-4)r]}$.

For the last term, it follows from \eqref{X}, \eqref{Last term} and \eqref{Holder 3} that
\begin{equation}\label{Case-3 P1}
\begin{aligned}
&\quad\left\|\int_{[\frac{t}{2}\vee (t-B),t]\cap [0,T_{j-1})} e^{i(t-s)\Delta} F(u(s))ds\right\|_{L_x^r}\\
&\lesssim \left\||t-s|^{-d(\frac{1}{2}-\frac{1}{r})}\|u(s)\|^\frac{d+4}{d}_{L_x^\frac{(d+4)r}{d(r-1)}}\right\|_{L^1_s\left([\frac{t}{2}\vee (t-B),t]\cap [0,T_{j-1})\right)}\\
&\leq \left\||t-s|^{-d(\frac{1}{2}-\frac{1}{r})}\|u(s)\|^{\frac{d+4}{d}(1-\theta)}_{L_x^2}\|u(s)\|^{\frac{d+4}{d}\theta}_{L_x^r}\right\|_{L^1_s\left([\frac{t}{2}\vee (t-B),t]\cap [0,T_{j-1})\right)}\\
&\leq \left\||t-s|^{-d(\frac{1}{2}-\frac{1}{r})}\|u(s)\|^{\frac{d+4}{d}(1-\theta)}_{L_x^2}\|u\|^{\frac{d+4}{d}\theta}_{X(T_{j-1})}|s|^{-(d+4)(\frac{1}{2}-\frac{1}{r})\theta}\right\|_{L^1_s\left([\frac{t}{2}\vee (t-B),t]\cap [0,T_{j-1})\right)}\\
&\leq |t|^{-(d+4)(\frac{1}{2}-\frac{1}{r})\theta}\|u\|^{\frac{d+4}{d}\theta}_{X(T_{j-1})}\|u\|^{\frac{d+4}{d}(1-\theta)}_{L_s^\infty L_x^2\left([0,T_{j-1})\times\mathbb{R}^d\right)}\int_{\frac{t}{2}\vee (t-B)}^t|t-s|^{\frac{2d-(d-2)r}{2r}-1}ds\\
&\lesssim |t|^{-\frac{2d-(d-4)r}{2r}}\|u\|^{\frac{2d-(d-4)r}{d(r-2)} }_{X(T_{j-1})}\|u\|^{\frac{2dr-4(d+2)}{d(r-2)}}_{L_s^\infty L_x^2}B^\frac{2d-(d-2)r}{2r}\\
&\lesssim |t|^{-\frac{2d-(d-4)r}{2r}}\|u\|^{\frac{2d-(d-4)r}{d(r-2)} }_{X(T_{j-1})}B^\frac{2d-(d-2)r}{2r}C(\|u_0\|_{L^2_x}).
\end{aligned}
\end{equation}
Argued analogously as \eqref{Case-3 P1}, we also have
\begin{equation}\label{Case-3 P2}
\begin{aligned}
\left\|\int_{[\frac{t}{2}\vee (t-B),t]\cap [T_{j-1}, T_j)} e^{i(t-s)\Delta} F(u(s))ds\right\|_{L_x^r}
&\lesssim |t|^{-\frac{2d-(d-4)r}{2r}}\|u\|^{\frac{2d-(d-4)r}{d(r-2)} }_{X(T_j)}\|u\|^{\frac{2dr-4(d+2)}{d(r-2)}}_{L_s^\infty L_x^2}B^\frac{2d-(d-2)r}{2r}\\
&\lesssim |t|^{-\frac{2d-(d-4)r}{2r}}\|u\|^{\frac{2d-(d-4)r}{d(r-2)} }_{X(T_j)}B^\frac{2d-(d-2)r}{2r}C(\|u_0\|_{L^2_x}).
\end{aligned}
\end{equation}

Choose $B=|t|^2\left(\|u\|_{X(T_j)}\eta^{-1}\right)^{-\frac{4r}{d(r-2)}}$ to minimize the sum of the right hand side of \eqref{fourth P1} and \eqref{Case-3 P1}, which yields
\begin{equation}\label{Case-3-1}
\left\|\int_{[\frac{t}{2},t]\cap [0,T_{j-1})} e^{i(t-s)\Delta} F(u(s))ds\right\|_{L_x^r} \lesssim |t|^{-d(\frac{1}{2}-\frac{1}{r})} C(\|u_0\|_{L_x^2})\|u\|_{X(T_{j-1})}.
\end{equation}
Similarly, choose $B=|t|^2\|u\|_{X(T_j)}^{-\frac{4r}{d(r-2)}}\eta^{\frac{4(d+4)r}{d[2d-(d-4)r]}}$ to minimize the sum of the right hand side of \eqref{fourth P2} and \eqref{Case-3 P2}, which yields
\begin{equation}\label{Case-3-2}
\left\|\int_{[\frac{t}{2},t]\cap [T_{j-1},T_j)} e^{i(t-s)\Delta} F(u(s))ds\right\|_{L_x^r}\lesssim |t|^{-d(\frac{1}{2}-\frac{1}{r})} C(\|u_0\|_{L_x^2}) \eta^\frac{2(d+4)[2d-(d-2)r]}{d[2d-(d-4)r]}\|u\|_{X(T_j)}.
\end{equation}

It follows from \eqref{Case-3-1} and \eqref{Case-3-2} that
\begin{equation}\label{Case 3}
\left\|\int_\frac{t}{2}^te^{i(t-s)\Delta}F(u(s))\right\|_{L_x^r}\lesssim |t|^{-d(\frac{1}{2}-\frac{1}{r})} C(\|u_0\|_{L_x^2})\left(\|u\|_{X(T_{j-1})}+\eta^{c(r)}\|u\|_{X(T_j)}\right),
\end{equation}
with $c(r)=\frac{2(d+4)[2d-(d-2)r]}{d[2d-(d-4)r]}$.
\vskip 0.5cm
Combining \eqref{case 1}, \eqref{case 2}, \eqref{case 3}, \eqref{Case 1}, \eqref{Case 2} and \eqref{Case 3}, we arrive at
\begin{equation*}
    |t|^{d(\frac{1}{2}-\frac{1}{r})}\|u(t)\|_{L_x^r}\lesssim \|u_0\|_{L_x^{r'}}+C(\|u_0\|_{L_x^2})\left(\|u\|_{X(T_{j-1})}+\eta^{c(r)}\|u\|_{X(T_j)}\right),
\end{equation*}
uniformly for $t\in [0,T_j)$, which proves \eqref{bootstrap} and we complete the proof.

\end{document}